\definecolor{red}{rgb}{1,0,0}
\definecolor{green}{rgb}{0,1,0}
\definecolor{blue}{rgb}{0,0,1}
\definecolor{refkey}{gray}{.625}
\definecolor{labelkey}{gray}{.625}
\def\F{\mathbb F}
\def\End{\mathrm{End}}
\def\Aut{\mathrm{Aut}}
\def\dim{\mathrm{dim}}
\def\tr{\mathrm{Tr}}
\def\id{\mathrm{Id}}
\def\Ker{\mathrm{Ker}}
\newcommand{\Ann}{\mathrm{Ann}}
\theoremstyle{plain}
\newtheorem{thm}{\protect\theoremname}[section]
\theoremstyle{plain}
\theoremstyle{plain}
\newtheorem{cor}[thm]{\protect\corollaryname}
\theoremstyle{plain}
\newtheorem{lem}[thm]{\protect\lemmaname}
\theoremstyle{definition}
\theoremstyle{definition}
\newtheorem{defn}[thm]{\protect\definitionname}
\theoremstyle{plain}
\newtheorem{remark}[thm]{\protect\remarkname}
\theoremstyle{definition}
\newtheorem{notation}[thm]{\protect\notationname}
  \providecommand{\corollaryname}{Corollary}
  \providecommand{\examplename}{Example}
  \providecommand{\lemmaname}{Lemma}
  \providecommand{\propositionname}{Proposition}
  \providecommand{\theoremname}{Theorem}
  \providecommand{\definitionname}{Definition}
  \providecommand{\remarkname}{Remark}
  \providecommand{\notationname}{Notation}
  \newcommand{\be}{%
  \begingroup
  \eqnarray%
   \@ifstar{\nonumber}{}%
  }
\title{A Modular Interpretation of BBGS Towers}
\author{Rui Chen}
\address{Yau Mathematical Sciences Center, Tsinghua University, Beijing, China}
\email{\href{chen-r15@mails.tsinghua.edu.cn}{chen-r15@mails.tsinghua.edu.cn}}
\author{Zhuo Chen}
\address{Department of Mathematical Sciences, Tsinghua University, Beijing, China}
\email{\href{chenzhuo@mail.tsinghua.edu.cn}{chenzhuo@mail.tsinghua.edu.cn}}
\thanks{Research partially supported by NSFC grant 11471179.}
\author{Chuangqiang Hu}
\address{Yau Mathematical Sciences Center, Tsinghua University, Beijing, China}
\email{\href{huchq@tsinghua.edu.cn}{huchq@tsinghua.edu.cn}}
\begin{document}
\begin{abstract}
	In 2000,  based on his procedure for constructing explicit towers of modular curves,  Elkies  deduced explicit equations of rank-$2$ Drinfeld modular curves which coincide  with the 
	asymptotically optimal towers of curves constructed by Garcia and Stichtenoth. In 2015, Bassa, Beelen, Garcia, and Stichtenoth constructed a celebrated	 (recursive and good) tower  (BBGS-tower for short) of curves  and outlined a modular interpretation of the defining equations. Soon after that,    Gekeler studied in depth  the  modular curves coming from sparse Drinfeld modules. 
	 In this paper, to establish a link  between these existing results, we propose and prove a generalized   Elkies' Theorem which tells in detail how to directly describe   a modular interpretation of the equations of rank-$m$ Drinfeld modular curves with $m\geqslant 2$. 

\end{abstract}

\maketitle{}

\textit{Key words:} ~Drinfeld module; Drinfeld modular curve; Ihara's quantity; BBGS tower.

\section*{Introduction}

\subsection*{From Ihara's Quantity to Recursive Towers}
Estimation of the number of rational points on an algebraic curve over  the finite field $\F_q$ is an important subject  in number theory and algebraic geometry.  Let $C$ be a geometrically irreducible and smooth curve  over $\F_q$ and $g=g(C)$  its genus. The number of $\F_q$-rational points on $C$ has a well-known upper bound due to Hasse-Weil \cite{Weil1971}:  
\[\#(C(\F_q))\leqslant q+1+2g\sqrt{q}. \]
An improved bound is obtained by Serre \cite{Serre1984}:
\[\#(C(\F_q))\leqslant q+1+g[2\sqrt{q}].\]
 A curve   that attains the Hasse-Weil bound is called maximal. The interested reader is referred to \cite{H.Stichtenoth2009,Garcia2010,Giulietti2006,Giulietti2008,Cakcak2004,Cakcak2005} for standard examples of Hermitian, Garcia-G\"{u}neri-Stichtenoth, Giulietti-Korchm\'{a}ros, Suzuki,  and Ree curves, and to \cite{Skabelund2018, Beelen2018} for recent progress on maximal curves.
 
   


Ihara \cite{Ihara1982} noted that the Hasse-Weil bound    becomes weak when the genus $g$ is relatively large   with respect to the size $q$ of the base field $\F_q$. 
  Also in \cite{Ihara1982}, Ihara introduced an  asymptotic bound  of the number of rational points, now known as the Ihara's quantity:
\[A(q):=\limsup\limits_{ {g}\to \infty}\frac{N_q( {g})}{ {g}},\]
where \[N_q( {g})=\max \{\#(C(\F_q))| \text{$C$ is a curve over $\F_q$ with genus $ {g}$}\}.\]

 The upper bound of Ihara's quantity     
\[A(q)\leqslant \sqrt{q}-1 \]
was discovered 
by Drinfeld-Vl\u{a}du\c{t} \cite{Vladut1983}. Meanwhile, in search of lower bounds of $A(q)$, people have invented  varies  constructions of  towers of curves over $\F_q$.
Roughly speaking, a tower  $\mathcal{T}$ of curves over $\F_q$  consists of a  family of curves $C_n$ together with a sequence of successive surjective maps:
 \[  
\begin{tikzcd}
C_1&C_2\arrow[l,"p_1"']&\arrow[l,"p_2"']\cdots&C_{n-1}\arrow[l,]&C_n\arrow[l,"p_{n-1}"']&
\cdots\arrow[l,"p_n"']
\end{tikzcd}
\]
such that all $C_n$ and $p_n$ are defined over $\F_q$ and   $g(C_n)\to \infty$  as $n\to \infty$.
The limit  
\[\lambda(\mathcal{T}):=\lim\limits_{n\to\infty}\frac{\#(C_n(\F_q))}{g(C_n)},\]
which always exists (see \cite[Lemma 7.2.3]{H.Stichtenoth2009}), certainly gives a lower bound of $A(q)$. 

A tower $\mathcal{T}$ is    called   (asymptotically)  \textit{good} if  $ \lambda(\mathcal{T}) > 0 $. 
Though it is  not normally easy to construct good towers, there are two approaches to construct them, either by class fields or by modular curves (classical, Shimura, and Drinfeld).    
In this paper, we will discuss three good towers arising from Drinfeld modules (elaborated in the subsequent Main Theorem).

Let us  list some remarkable lower bounds of  Ihara's quantity   achieved by good towers. 
\begin{enumerate}
\item Serre \cite{Serre1983} obtained the result 
\[A(q)\geqslant c\cdot \log q\] 
for some constant $c> 0$.  
A particular value $c=\frac{1}{96}$ appeared in  \cite{Niederreiter2001}.
\item For $q$ being small prime numbers, some known results are found by Angles and Maire \cite{Angles2002} ($A(5)\geqslant \frac{8}{11}$),  Hajir and Maire \cite{Hajir2000} ($A(3)\geqslant \frac{12}{25}$), Li and Maharaj \cite{Li2002} ($A(7)\geqslant \frac{9}{10}$, $A(11)\geqslant \frac{12}{11}$, $A(13)\geqslant \frac{4}{3}$, and $A(17)\geqslant \frac{8}{5}$), Niederreiter and Xing \cite{Niederreiter1998} ($A(2)\geqslant \frac{81}{317}$), Xing and Yeo \cite{Xing2007} ($A(2)\geqslant \frac{97}{376}$), and   Hall-Seelig  \cite{Hall2013} ($A(7)\geqslant \frac{12}{13}$ and $A(11)\geqslant \frac{8}{7}$). This list is not complete. 
\item   For square numbers $q $, a sharp bound is discovered:  $A(q)\geqslant \sqrt{q}-1$ (hence $A(q)=\sqrt{q}-1$),   independently, by Ihara  \cite{Ihara1982} 
and Tsfasman, Vl\u{a}du\c{t}, and  Zink  \cite{Tsfasman1982},   one  
using   families of Shimura modular curves, the other using   
families of classical modular curves.
By
Gekeler \cite{Gekeler2004},  certain families of Drinfeld modular curves  also attain this lower bound. 
\item When $q$ is a cubic number, say $q=p^3$,    Zink \cite{Zink1985} got the result   $A(q)\geqslant \frac{2(p^2-1)}{p+2}$ under the assumption that  $p$ is a prime.   Bezerra, Garcia, and Stichtenoth  \cite{Bezerra2005} proved  that this inequality holds for arbitrary cubic numbers $q$.
\item When $q=p^{2m+1}$ where $m\geqslant 1$,  Bassa, Beelen, Garcia, and Stichtenoth \cite{Bassa2015}  proved  that 
\begin{equation}\label{Eq:lbqm}
A(q)\geqslant \frac{2(p^{m+1}-1)}{p+1+(p-1)/{(p^m-1)}},
\end{equation}  which is a   source of inspiration of the present paper.
\end{enumerate}By   Goppa's construction \cite{Goppa1981},
good towers yield 
good linear error-correcting codes.    A celebrated discovery by Tsfasman \textit{et al.} \cite{Tsfasman1982} --- the existence of long linear codes with the relative parameters above the well-known Gilbert-Varshamov bound  \cite[Proposition 8.4.4]{H.Stichtenoth2009}, provided a vital link  between Ihara's quantity   and the realm of coding theory.

Good towers that are recursive  play   important roles in the  studies of  Ihara's quantity,  coding theory, and cryptography \cite{Hu2016,Hu2017,Hu2019,Cascuso2014,Conny1997,Aleshnikov1999}.     A tower $\mathcal{T}$ is called \textit{recursive} by an absolutely irreducible polynomial $f(x,y)\in\F_q(x)[y]$ (see \cite[Sections 3.6 and 7.2]{H.Stichtenoth2009}) if 
\begin{enumerate}
	\item  The initial curve $C_1$ is the projective line with coordinate $x_1$;
	
	\item For  $n\geq 2$,   
	 $C_n$ is the nonsingular projective model of an affine   curve defined by 
	 \[f(x_1,x_2)=f(x_2,x_3)=\cdots=f(x_{n-1},x_n)=0. \] 
	   \end{enumerate}

A first concrete example of good tower which is recursive over $\F_{q^2}$  is given in 1995  by
 Garcia and Stichtenoth \cite{Garcia1995}
with the  recursive polynomial    
\begin{align}\label{Eq:GST95}
f(x,y)= x^{q-1}y^q+ y-x^{q}.
\end{align}
Soon after that, they gave another tower  with the recursive function \cite{GarciaStichtenoth1996}  
\begin{align}\label{Eq:GST96}
f(x,y)=y^q+y-\frac{x^q}{x^{q-1}+1}, 
\end{align}
which turns out to be a subtower of the previous one. An excellent fact is that each of the two towers fulfils    $\lambda(\mathcal{T})=q-1$, the lower bound. We call such kind of towers   \emph{optimal}. 

The subject for investigation in this paper is concerned with   what 
Bassa, Beelen, Garcia, and Stichtenoth had 
presented in \cite{Bassa2015,A.Bassa2014} --- a general construction of recursive towers over non-prime fields. For short, we call them  \textit{BBGS towers}. Below is a brief account of such towers.

Suppose that $m=j+k\geqslant 2$ is a positive integer,  where $ j$ and $ k $ are coprime positive integers. Let $a$ and $b$ be   non-negative integers such that $ak-bj=1$.  Consider the  tower  $\mathcal{F}$ (over $\F_{q^m}$), respectively  $\mathcal{H}$,  arising from the recursive polynomial 
\begin{equation}\label{Eq:TBBGSF}
 {\mathcal{F}}(x,y)=\tr_j\left(\frac{y}{x^{q^k}}\right)+\tr_k\left(\frac{y^{q^j}}{x}\right)-1,\end{equation}
respectively
\begin{equation}\label{Eq:TBBGSH}
{\mathcal{H}}(x,y)=\frac{\tr_j(y)-a}{\tr_j(x)^{q^k}-a}-\frac{\tr_k(y)^{q^j}-b}{\tr_k(x)-b},
 \end{equation}
where $\tr_l(x):=\sum_{i=0}^{l-1}x^{q^i}$.  
A key result in \cite{A.Bassa2014} is  the inequality
\begin{equation*}
\lambda(\mathcal{H})\geqslant\lambda(\mathcal{F})\geqslant 2\left(\frac{1}{q^k-1}+\frac{1}{q^{j}-1}\right)^{-1}
\end{equation*} 
from which one obtains  the lower bound in Equation  \eqref{Eq:lbqm}. Note that  the $m=2$ case of the tower $\mathcal{F}$ coincides with the one constructed by Equation \eqref{Eq:GST95}.

\subsection*{The Motivation and Main Result}
Part of the motivation behind this work is to better understand 
the modular interpretation of the BBGS towers presented in \cite{Bassa2015}. We are also inspired by
the other two works --- one is \cite{Elkies2001} by Elkies, where it is shown that the towers in \eqref{Eq:GST95}
 and \eqref{Eq:GST96} both arise from Drinfeld modular curves, the other is a recent work \cite{Nurdagul2017} by Anbar, Bassa, and Beelen, where   a particular tower $\mathcal{H}$ in \eqref{Eq:TBBGSH} with $(m,j,k)=(3,2,1)$ is investigated and proved to be modular. 
 
 It is natural to ask whether one can work out explicit modular explanations  of the BBGS towers $\mathcal{F}$ and $\mathcal{H}$ in, respectively,  \eqref{Eq:TBBGSF} and \eqref{Eq:TBBGSH} with general $(m,j,k)$, and if so, what information can be derived from such explanations.  
For this purpose, the present paper will follow a framework described by Gekeler  who proposed an abstract construction of Drinfeld modular curves \cite{Gekeler2019}, and our answer is  an explicit description of the relevant curves.

\vskip0.5cm
{\textbf{The Main Theorem}} [Generalized Elkies' Theorem]\label{Thm:A}
{\textit{Let $\mathcal{F}$  and   $\mathcal{H}$ be functions defined as earlier  by Equations \eqref{Eq:TBBGSF} and \eqref{Eq:TBBGSH}. 	Let ${X}_{m,j}(T^n)$, $\dot{X}_{m,j}(T^n)$, and $\ddot{X}_{m,j}(T^n)$ be Drinfeld modular curves  defined as in Definition  \ref{Defn:normalizedElkiesModularCurve}. Assume that $k$ is not divided by  the characteristic $p$.   Then,
	\begin{enumerate}[A.]
		\item The function field $\ddot F^{(n)}_{m,j}$ of the Drinfeld modular curve $ \ddot{X}_{m,j}(T^n) $ over $\F_{q^m}$ is generated by  variables $x_1, x_2, \ldots , x_{n} $ that are subject to the following recursive equations 
		 \begin{equation}\label{Eq:A,m,j}
		\mathcal{F}(x_{i-1},x_{i})=0,\qquad  i=2,3,\ldots,n.
		\end{equation} 
		\item The function field $\dot F^{(n)}_{m,j}$ of the Drinfeld modular curve $ \dot{X}_{m,j}(T^n) $ over $\F_{q^m}$ is generated by   variables $X_1, X_2, \ldots , X_{n} $ that are subject to the following recursive equations 
		 \begin{equation}\label{Eq:dA,m,j}
		\mathcal{G}(X_{i-1},X_i)=0,\qquad i=2,3,\ldots,n,
		\end{equation}
		where
				\[{\mathcal{G}}(x,y)=y\left(\sum_{i=0}^{j-1}\frac{y^{N_i}}{x^{N_{k+i}}}+\sum_{i=j}^{m-1}\frac{y^{N_i}}{x^{N_{i-j}}}\right)^{q-1}-x ~\mbox{ and }~ N_l=\frac{q^l-1}{q-1} .\]
						\item The function field $F^{(1)}_{m,j}$ of $X_{m,j}(T)$ over $\F_{q^m}$ equals the rational function field $\F_{q^m}(z)$ with variable $z$. 
		If $n\geqslant 2$, then the function field $F^{(n)}_{m,j}$ of $X_{m,j}(T^n)$ over $\F_{q^m}$ is generated by variables $u_2,\ldots,u_{n}$ satisfying the recursion
	\begin{equation}\label{Eq:HU1U2}
		\mathcal{H}(u_{i-1},u_i)=0,\qquad i=3,4,\ldots,n.
	\end{equation}
			\end{enumerate}
}}

We  remark that\begin{enumerate} \item Parts A and B of the Main Theorem  can be  adapted to arguments over the base field $\F_{q}$;
\item
The original Elkies' Theorem in \cite[Section 4]{Elkies2001}  corresponds to the  $(m,j,k)=(2,1,1)$ case; 

\item It is tempting to mimic  Elkies' approach to handle the general $(m,j,k)$ cases ($m\geqslant 2$). However, it does not simply yield what the theorem desired. Instead, we find another but  equivalent description of Drinfeld modular curves, and thereby obtaining recursive formulas of the corresponding curves. 
\end{enumerate}

From the Main Theorem described as above, we are able to find supersingular points (which are necessarily rational) on the three modular curves  (see Remark \ref{Rmk:last}). We hope our  result  will shed   light on finding explicit equations of certain  towers, and in particular, provide valuable geometric insight into the nature of  the BBGS-towers. 

To this day, little is known about the general description of recursive towers from modular curves. Li, Maharaj,  Stichtenoth, and Elkies   \cite{H.Stichtenoth2002} exhibited four  optimal towers over $\F_{p^2}$ ($p=2,3,5,7$); Garcia, Stichtenoth, and  R\"{u}ck \cite{Garcia2003} computed an optimal tower over $\F_{p^2}$; Hasegawa, Inuzuka, and Suzuki \cite{Hasegawa2012,Hasegawa2013,Hasegawa2017} provided a number of classical and Shimura modular curves by using Elkies' procedure;
Hallouin and Perret \cite{Hallouin2016} proposed a systematic method to produce potentially good recursive towers over finite fields. Our result and approach should be useful in the studies of  Drinfeld modular curves in a wider range.    
In fact, based on the current work,    we have come up with a sequence of Drinfeld modular curves which are organized in an elegant manner.

We also would like to point out works of others that are related to the present paper.
In the work of Hu and Zhao \cite{Hu2017,Hu2016}, varies bases of certain Riemann-Roch spaces associated to  the BBGS tower  $\mathcal{F}$ are investigated.  The interlink between explicit towers and modular curves   emerges  in Elkies' works \cite{Elkies1997,Elkies2001,Elkies2002}, leading to the \textit{Elkies' modularity  conjecture}  --- All asymptotically optimal recursive   towers defined over $\F_{q^2}$   arise from reductions of elliptic,
Shimura, or Drinfeld modular curves.

This paper is organized as follows.  
Section \ref{Sec:basic} gives a succinct account of standard facts
about Drinfeld modules, whose purpose is
to fix the notation. 
The 
Drinfeld modular curves $X_{m,j}(T^n)$, $\dot{X}_{m,j}(T^n)$, $\ddot{X}_{m,j}(T^n)$, and $\ddot{M}_{m,j}(T^n)$ ($n\geqslant 1$) are defined in Section \ref{Sec:DMC}  and 
a relation between $\ddot{X}_{m,j}(T^n)$ and $\ddot{M}_{m,j}(T^n)$ is then proved.  Sections  \ref{Sec:basic} and \ref{Sec:DMC} also establish a list of important facts and identities that are
 subsequently used in Section \ref{Sec:Main part}  to prove
the statements of our Main Theorem.

\subsection*{Acknowledgements}

Hu would  like to thank Binglong Chen, Chang-An Zhao, and Chaoping Xing
for useful discussions and comments. We gratefully  acknowledge the  critical  review by the anonymous reviewer   on the first version of the manuscript.

\section{Preliminaries}\label{Sec:basic}
\subsection{Drinfeld Modules}
In this part we give a brief introduction to the notion of Drinfeld module, which was  introduced by Drinfeld in his celebrated work  \cite{Drinfeld1974}. For more in-depth studies, please see \cite{Drinfeld1974,Gekeler1986,Goss1996,Villa2006,Dale2018}.
    
  
Some notations are in order. 
Let $\F_q$ be a finite field of cardinality $q$. Denote by $A:=\F_q[T]$ the polynomial ring over $\F_q$. Let  $ L $ be a field containing $ \F_q $ together with a fixed $ \mathbb{F}_{q} $-algebra homomorphism $ \iota : A \to L $.  Denote by $ L\{ \tau \} $ the non-commutative $ L $-algebra which is generated by the $q$-Frobenius endomorphism $ \tau $ such that  $ \tau \cdot a = a^{q } \tau  $ for all $ a \in L$. We refer to the $L$-algebra $ L\{ \tau \} $   as a \textit{twisted polynomial ring} (also known as an Ore ring \cite{Ore1933}). 
Denote by $ \mathbb{G}_a $ the additive group scheme over $ L $. It is standard that the ring of $\F_q$-linear endomorphisms  $ \End_{\F_q} (\mathbb{ G}_a )  $ of $ \mathbb{G}_a $ is isomorphic to $ L\{ \tau \} $. 

Let $ \bar L $  be an  algebraic closure of $ L $. By restricting $ \End_{\F_q} (\mathbb{ G}_a ) $ to the $ \bar{L} $-geometric points of $ \mathbb{ G}_a $, we obtain an induced   action of $ L \{\tau \} $ on $\bar{L} $. Explicitly, the action of a twisted polynomial $ f = \sum_{i=0}^{m} g_i \tau^i \in L\{\tau\}  $  on $ \bar{L} $ is given by
\[
f: \bar{L} \to \bar{L} , \qquad \mu \mapsto f(\mu):=\sum_{i=0}^{m} g_i \mu^{q^i}  .
\]
 The kernel of   $f$  is defined and denoted by	$$\Ker (f):=\{\mu\in \bar L | f(\mu)=0\}, $$ which is a finite dimensional  $\F_q$-linear subspace of $\bar L$. 
	It is more suitable to consider $ \Ker(f)$ 
	as a group subscheme of $ \mathbb{G}_a $, rather than   a subgroup of $ \bar L $. However,  we shall not need this refinement in this work.
 
 The \textit{point derivation} $\partial_0$ of a twisted polynomial $f$ at $ 0 $ is  standard:
 \[
 \partial_0:\quad L\{\tau \} \to L , \qquad f=\sum_{i=0}^{m} g_i \tau^i \mapsto g_0.
 \]
 Note  that $\partial_0$ is a homomorphism of $ \mathbb{F}_{q} $-algebras.

 A $\textit{Drinfeld module}$ over $L$  is  an $ \mathbb{F}_{q } $-algebra homomorphism
  \[\phi :  A \to L\{ \tau \},\qquad a\mapsto \phi_a\,,\]
   satisfying the conditions
   \begin{enumerate}
     \item  there exists $ a \in A $ such that $ \phi_ a \not = \iota (a )$; and
     \item $\partial_0 \circ \phi = \iota $, i.e., the following diagram of $ \mathbb{F}_{q } $-algebra homomorphisms
     \begin{equation*}
      \begin{tikzcd}
     A\arrow[rd,"\iota"] \arrow[r,"\phi"] & L\{\tau\}\arrow[d,"\partial_0"]\\
     & L
   \end{tikzcd}
   \end{equation*}
   is commutative.
   \end{enumerate}

For a Drinfeld module $\phi$ as above, the kernel of $\iota$, which is a prime ideal in $A$, is called   the \textit{characteristic} of $\phi$.   
As $ A$ is the polynomial ring $\F_q[T] $,   a Drinfeld module $ \phi $ is uniquely determined by a twisted polynomial $\phi_T$ over $L$. We suppose that
  \begin{equation*}
 \phi_T =   g_m \tau^m +\cdots + g_2 \tau^2 + g_1 \tau + g_0\,,
     \end{equation*}
    where $ g_ m$  is not $0 $ for some integer $m>0$. The number $ m  $ is called the \textit{rank} of   $ \phi $. If 
    $ g_i = 0 $ for $ 1 \leqslant i \leqslant m-1 $, then $\phi$ is said to be  $\textit{supersingular}$.

   For a polynomial  $a\in A$, the kernel of $\phi_a $ is an  $ A $-submodule of $\bar{L}$ thanks to the commutativity    $\phi_a\phi_b=\phi_b\phi_a$, for all $ b \in A $.    In a special situation described below, the $A$-module structure of $\Ker (\phi_a)$ is explicit.

 \begin{lem}[{\cite[Proposition I.1.6]{Gekeler1986}}]\label{Lem:moduleiso}
   If  $a\in A$ is coprime to the characteristic of $\phi$, then
 \[ \Ker ( \phi_{a} ) \cong  (A/(aA))^{\oplus m} \]
 as $A$-modules.
\end{lem}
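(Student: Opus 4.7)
The plan is to exploit two facts in tandem: the $A$-module structure on $\Ker(\phi_a)$ coming from the commutativity $\phi_a\phi_b=\phi_b\phi_a$ (so that $b\cdot\mu:=\phi_b(\mu)$ is well-defined on the kernel), and a cardinality count made possible by separability. First I would observe that since $a$ annihilates every element of $\Ker(\phi_a)$, this kernel is in fact a module over the quotient ring $A/(aA)$. Because $A=\F_q[T]$ is a principal ideal domain, the structure theorem for finitely generated modules over a PID will decompose $\Ker(\phi_a)$ as a direct sum of cyclic pieces $A/(d_iA)$ with $d_i\mid a$; the task is to show that every $d_i$ equals $a$ and that there are exactly $m$ summands.

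The second ingredient is a cardinality count. Viewing $\phi_a(\mu)\in\bar L[\mu]$ as an ordinary polynomial of degree $q^{m\deg a}$, its formal derivative with respect to $\mu$ is the constant term of $\phi_a$ as a twisted polynomial in $\tau$, which is $\partial_0(\phi_a)=\iota(a)$. The hypothesis that $a$ is coprime to the characteristic $\Ker(\iota)$ of $\phi$ gives $\iota(a)\neq 0$, so $\phi_a$ is separable and hence
\[
\#\Ker(\phi_a)=q^{m\deg a}=\bigl(\#(A/(aA))\bigr)^{m}.
\]

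With these two pieces in hand, I would finish by a Chinese Remainder reduction to prime powers. Writing $a=\prod_\ell p_\ell^{e_\ell}$ in pairwise coprime factorization, one has $\Ker(\phi_a)=\bigoplus_\ell \Ker(\phi_{p_\ell^{e_\ell}})$, so it suffices to treat $a=p^e$ for an irreducible $p\in A$ coprime to the characteristic. Here $\Ker(\phi_p)$ is a vector space over the field $A/(pA)$ of cardinality $(\#A/(pA))^m$, forcing its dimension to be exactly $m$. Now for $\Ker(\phi_{p^e})$, the structure theorem gives a decomposition $\bigoplus_{i=1}^{r} A/(p^{f_i}A)$ with $f_i\leqslant e$; the sub-$p$-torsion identifies with $(A/(pA))^{r}$, so $r=m$, while the total cardinality forces $\sum_i f_i=me$. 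Combined with $f_i\leqslant e$, this yields $f_i=e$ for all $i$, proving $\Ker(\phi_{p^e})\cong (A/(p^eA))^{\oplus m}$.

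The only delicate point is the separability step, since it is the one place where the hypothesis "$a$ coprime to the characteristic" is really used; everything else is abstract linear algebra over a PID. I would be careful to distinguish the formal derivative of a twisted polynomial viewed as an element of $\bar L[\mu]$ from the point derivation $\partial_0$, and to verify that they agree on the constant $\tau$-coefficient, so that the identity $\partial_0\circ\phi=\iota$ delivers the nonvanishing of the derivative exactly when $a\notin\Ker(\iota)$.
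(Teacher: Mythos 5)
Your proof is correct and complete. The paper does not prove this lemma at all---it simply cites Gekeler's \emph{Proposition I.1.6}---so you are supplying the argument the paper delegates to a reference, and you do it along precisely the standard lines: the separability count $\#\Ker(\phi_a)=q^{m\deg a}$ (using that the $\mu$-derivative of $\phi_a(\mu)$ collapses to the $\tau^0$-coefficient $\iota(a)\neq 0$ in characteristic $p$), the Chinese Remainder reduction to prime powers, and the structure theorem over the PID $A$, pinning down the invariant factors by looking at the $p$-torsion subspace and then the total cardinality. The one point worth stating explicitly, which you implicitly use, is that $\Ker(\phi_{p^e})[p]=\Ker(\phi_p)$ (both are the set of $\mu$ with $\phi_p(\mu)=0$, since $p\mid p^e$), which is what lets the $r=m$ count carry over; but this is immediate and your argument is sound.
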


 One recognizes this fact in parallel  with a well-known result  of elliptic curves:
 $$
 E[n]\cong \mathbb{Z}/(n\mathbb{Z}) \oplus \mathbb{Z}/(n\mathbb{Z}),
 $$
 where $E[n]$ is the group of $n$-torsion points on an elliptic curve $E$.



  \subsection{Isomorphisms of Drinfeld Modules}
  We make some important conventions in subsequent analysis:
  \begin{itemize}
  	\item We assume that $\F_{q^m}\subseteq L$;
  	\item We only consider 
  	Drinfeld modules of the form
  	\begin{equation}\label{Eq:type}
  	\phi_{T} = g_m \tau^m + g_j \tau ^j + 1.
  	\end{equation}
  	Here  $m=j+k\geqslant 2$, and $ j$ and $ k$ are mutually coprime positive integers.  
  	  \end{itemize} 
    By the second assumption, the characteristic  of  a Drinfeld module   is the ideal $  ( T-1 ) $. In other words,   $ \iota $ maps $T \in A$ to $ 1 \in L$. 
 Indeed, Bassa \emph{et al.} studied this type of Drinfeld modules 
 in \cites{A.Bassa2014,Bassa2015}.
  Clearly,   $\phi$ is of rank    $m$.  We call $\phi$ \textit{normalized} if $g_m=-1$. 
  \begin{notation}
  We denote by $  \mathcal{D}_{m,j} $ the set of  normalized Drinfeld modules that are of the form \begin{equation}\label{Eq:normalizedtype}
  \phi_{T} = - \tau^m + g_j \tau ^j + 1.
  \end{equation}
  	
  \end{notation}
   
 \begin{defn}
 Two Drinfeld modules $\phi$ and $\psi$   over $L$  are said to be \textit{isomorphic} over $\bar L$, if   there exists an element $\lambda\in \bar L^*$ such that for all $a\in A$, the equation
 \begin{equation}\label{Eq:defnisomorphic}
 \lambda\phi_a=\psi_a\lambda
 \end{equation} 
 holds in $\bar L\{\tau\}$.
 \end{defn}
 
Certainly, Equation (\ref{Eq:defnisomorphic}) amounts to the condition  $\lambda\phi_T=\psi_T\lambda$.	
  A  Drinfeld module  of the form (\ref{Eq:type}) 
    is  isomorphic to   a  normalized one over $\bar L$ provided that $(-g_m) $ is a $(q^m-1)$-st power.
  In fact, one takes $\lambda\in   L$ which is a  root of $(-g_m)$ of order $(q^m-1)$.   Then the scalar multiplication by $\lambda$ gives   an isomorphism from $\phi$ to the normalized Drinfeld module $\psi$   by setting
  \[\psi_T=-\tau^m+\lambda^{1-q^j}g_j\tau^j+1, \]
  because $\lambda\phi_T=\psi_T\lambda$.

  Recall that $N_m=\frac{q^m-1}{q-1}$. We call $ J(\phi): = g_j^{N_m} \in L$  the $ J $-\textit{invariant} of the normalized Drinfeld module  $ \phi \in  \mathcal{D}_{m,j}  $.  
 A well-known fact is that the isomorphism class of an elliptic curve is completely determined by its $j$-invariant. A similar fact  for Drinfeld modules is the following   
 \begin{lem}[{\cite[Section 4]{Bassa2015}}]\label{Lem:Isomorphic}
 	  	 For two normalized Drinfeld modules   $ \phi$ and  $ \phi^{\prime} $ represented, respectively, by
 		\begin{equation}
 		\label{Eq:phiTandphiT'} \phi_{T} = -\tau^m + g_j\tau ^j +1  \mbox{~and~}~ \phi_{T}^{\prime} = -\tau^m + g_j^{\prime}\tau ^j +1,
 		\end{equation}
 		the following statements are equivalent:
 		\begin{enumerate}
 			\item The Drinfeld modules $\phi$ and $\phi'$ are isomorphic over $\bar L $;
 			\item There exists some $ \lambda \in  \mathbb{F}_{q^m}^{* } $ such that $ g_j =g_j' \lambda^ {q^j -1 } $;
 			\item The $J$-invariants of $\phi$ and $\phi'$ coincide: $ J(\phi) = J(\phi^{\prime}) $.
 		\end{enumerate} 
   \end{lem}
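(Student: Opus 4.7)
The plan is to establish the cycle $(1)\Leftrightarrow(2)\Rightarrow(3)\Rightarrow(2)$, where the last implication will be the main obstacle.

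For $(1)\Leftrightarrow(2)$, I would unpack the defining equation $\lambda\phi_T=\phi_T'\lambda$ directly inside the twisted polynomial ring $\bar L\{\tau\}$. Using the commutation rule $\tau^i a=a^{q^i}\tau^i$ to push $\lambda$ past $\tau^m$ and $\tau^j$ on the right-hand side and then matching coefficients of $\tau^m$, $\tau^j$, and $\tau^0$: the $\tau^m$-coefficient yields $\lambda^{q^m}=\lambda$, which forces $\lambda\in\F_{q^m}^{*}$; the $\tau^j$-coefficient gives $\lambda g_j=g_j'\lambda^{q^j}$, equivalent to $g_j=g_j'\lambda^{q^j-1}$; and the constant term is automatic. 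This delivers $(1)\Leftrightarrow(2)$ in one computation.

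For $(2)\Rightarrow(3)$, I would raise $g_j=g_j'\lambda^{q^j-1}$ to the $N_m$-th power and invoke the arithmetic identity $(q^j-1)N_m=N_j(q^m-1)$ together with $\lambda^{q^m-1}=1$. The $\lambda$-contribution then collapses and one reads off $g_j^{N_m}=(g_j')^{N_m}$, i.e., $J(\phi)=J(\phi')$.

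The crux is the converse $(3)\Rightarrow(2)$. The case $g_j'=0$ is immediate, since then $J(\phi)=0$ forces $g_j=0$ and $\lambda=1$ suffices. Assuming $g_j'\neq 0$, the condition $(g_j/g_j')^{N_m}=1$ places $g_j/g_j'$ inside the order-$N_m$ subgroup of $\F_{q^m}^{*}$ (every $N_m$-th root of unity lies in $\F_{q^m}$ since $N_m\mid q^m-1$). I then need to exhibit $\lambda\in\F_{q^m}^{*}$ with $\lambda^{q^j-1}=g_j/g_j'$. The essential input is $\gcd(j,m)=\gcd(j,j+k)=\gcd(j,k)=1$, which yields $\gcd(q^j-1,q^m-1)=q-1$; consequently the image of the $(q^j-1)$-th power map on the cyclic group $\F_{q^m}^{*}$ has order $N_m$ and therefore coincides exactly with the subgroup containing $g_j/g_j'$, so a suitable $\lambda$ exists.

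The hard part will be this last implication, specifically the surjectivity of the $(q^j-1)$-th power map onto the correct subgroup of $\F_{q^m}^{*}$; everything hinges on the small number-theoretic identity $\gcd(q^j-1,q^m-1)=q-1$, itself a consequence of the coprimality of $j$ and $k$ built into the setup of $\mathcal{D}_{m,j}$.
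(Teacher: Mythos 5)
Your proposal is correct and follows essentially the same route as the paper: coefficient-matching in $\bar L\{\tau\}$ for $(1)\Leftrightarrow(2)$, the direct computation $g_j^{N_m}=(g_j'\lambda^{q^j-1})^{N_m}=(g_j')^{N_m}$ for $(2)\Rightarrow(3)$, and for $(3)\Rightarrow(2)$ the observation that coprimality of $j$ and $m$ (equivalently, of $j$ and $k$) makes the image of $\mu\mapsto\mu^{q^j-1}$ on $\F_{q^m}^{*}$ equal to the $(q-1)$-th-power subgroup of order $N_m$. Your explicit identity $(q^j-1)N_m=N_j(q^m-1)$ and the $\gcd(q^j-1,q^m-1)=q-1$ formulation are just spelled-out versions of steps the paper states more tersely.
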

We sketch a proof for completeness.
 \begin{proof}The implication (1) $\Leftrightarrow$ (2) is easy.
 	 	  If $\lambda \phi_T=\phi'_T\lambda$ holds for some $\lambda\in \bar L$,  then
 	\[-\lambda\tau^m+g_j\lambda\tau^j+\lambda=-\lambda^{q^m}\tau^m+g_j'\lambda^{q^j}\tau^j+\lambda.\]
 	Hence, $\lambda^{q^m-1}=1$ and $g_j=g_j' \lambda^ {q^j -1 } $. The converse   is also obvious.
 	
 It is   straightforward to see the implication (2) $\Rightarrow$ (3).	If  $g_j =g_j' \lambda^ {q^j -1 } $ holds for some $ \lambda \in  \mathbb{F}_{q^m}^{* } $, then \[{g_j}^{N_m}=\left({g_j' \lambda^ {q^j -1 }}\right)^{N_m}=({g_j' })^{N_m}.\]

 	We finally show the implication (3) $\Rightarrow$ (2).
 	Note that if $g_j=g'_j=0$ (i.e. supersingular Drinfeld modules), the proof is trivial. Below we assume that  $g_j'\neq 0$.

 	If $ J(\phi) = J(\phi^{\prime}) $ holds, then  
 	\[\left(\frac{g_j}{g'_j}\right)^{N_m}=1, \] and  hence $g_j/{g'_j}\in (\mathbb{F}_{q^m}^* ) ^{q-1}$.
 	Since $  m $ and $j $ are coprime, the image of { the map}
 	\[ \F_{q^m}\to\F_{q^m},\qquad \mu \mapsto \mu^{q^j-1} \]
 	is identically $ (\mathbb{F}_{q^m} ) ^{q-1}$. Therefore, there exists  $\lambda\in\F_{q^m}^*$  such that
 	$\frac{g_j}{g'_j}=\lambda^{q^j-1}$, as required. 
 \end{proof}

 \subsection{Isogenies of Drinfeld Modules}
 \begin{defn}
    An \textit{isogeny} of Drinfeld modules from $ \phi $ to $ \psi $ is a twisted polynomial $ \lambda \in \bar{L} \{ \tau \}  $ such that for all $a \in A$, the equation
 \begin{equation}\label{Eq:defnisogeny}
 \lambda\phi_a = \psi_a\lambda    
 \end{equation}
 holds in $\bar{L} \{ \tau \}$.
\end{defn}
 	
 	Apparently, Equation \eqref{Eq:defnisogeny} amounts to the condition   $\lambda\phi_T=\psi_T\lambda$. 
 	In this case,    $ \Ker(\lambda) \subseteq \bar{L}$    admits an $ A $-module structure which is defined by
	\[a\cdot \mu:=\phi_a(\mu),\qquad \text{ for $a\in A,~\mu\in\Ker(\lambda)$}.\]
	Here     $\phi_a(\mu)$ on the right hand side belongs to $\Ker(\lambda)$,  by Equation \eqref{Eq:defnisogeny}.
	

\begin{notation}  Let us  set up some constantly used notations.    For $0\neq x\in \bar L$, define	three types of twisted polynomials over $\bar L$:
	\begin{enumerate}
		\item $\eta_x: =1+x^{1-q}\tau+x^{1-q^2}\tau^2+\cdots+x^{1-q^{k-1}}\tau^{k-1}$;
				\item $\lambda_x:=x^{q^k-1}-\tau^k=(x^{q^k-1}-x^{q^k-q}\tau)\eta_x$; and
			\item \begin{flushleft} $Q_x:=x^{1-q^k}+x^{1-q^{k+1}}\tau+\cdots+x^{1-q^{m-1}}\tau^{j-1}  +\tau^j+x^{1-q}\tau^{j+1}+\cdots+x^{1-q^{k-1}}\tau^{m-1}$.\end{flushleft}
			  					 			\end{enumerate}  
\end{notation}
Now we can reformulate the function $\mathcal{F}$ defined by  Equation \eqref{Eq:TBBGSF}:$$\mathcal{F}(x,y)=\frac{Q_{x}(y)}{x}-1.$$
 \begin{notation}\label{Notation:phix}
	For $0\neq x\in \bar L$, let $\phi^x$ be the Drinfeld module in $\mathcal{D}_{m,j}$ such that $\phi^x_T(x)=0$. In other words,   $\phi^x$ is 
	represented by
	\[\phi_T^x:=-\tau^m+g(x)\tau^j+1,\] 
	where $g(x)=x^{q^m-q^j}-x^{1-q^j}$.
\end{notation}  
We need a lemma which is generalized from \cite[Equation (11)]{Elkies2001} (for the $(m,k)=(2,1)$ case) and \cite[Section 3.1]{Nurdagul2017} (for the $(m,k)=(3,1)$ case).
\begin{lem}\label{Lem:etaphiQlam}
	Let $\eta_x$, $Q_x$, $\phi_T^x$,  and   $\lambda_x$ be defined as earlier. We have
	\[   \eta_x\phi^x_T=Q_x \lambda_x.\]
\end{lem}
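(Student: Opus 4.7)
The plan is to verify the identity by direct computation in the twisted polynomial ring $\bar L\{\tau\}$, using the commutation rule $\tau^i a = a^{q^i}\tau^i$ for $a \in \bar L$. Since $\eta_x$, $\phi_T^x$, $Q_x$, and $\lambda_x$ have degrees $k-1$, $m$, $m-1$, and $k$ respectively, both $\eta_x \phi_T^x$ and $Q_x \lambda_x$ are twisted polynomials of degree $m+k-1$, so the proof reduces to matching coefficients of $\tau^\ell$ for $0 \le \ell \le m+k-1$.

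First I would expand the left-hand side. Splitting $\phi_T^x = 1 + g(x)\tau^j - \tau^m$ and using $g(x)^{q^i} = x^{q^{m+i}-q^{j+i}} - x^{q^i - q^{j+i}}$, one obtains
\begin{equation*}
\eta_x \phi_T^x \;=\; \sum_{i=0}^{k-1} x^{1-q^i}\tau^i \;+\; \sum_{i=0}^{k-1}\bigl(x^{1+q^{m+i}-q^i-q^{j+i}} - x^{1-q^{j+i}}\bigr)\tau^{i+j} \;-\; \sum_{i=0}^{k-1} x^{1-q^i}\tau^{i+m},
\end{equation*}
with contributions supported in degree ranges $[0,k-1]$, $[j,m-1]$, and $[m,m+k-1]$ respectively.

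Next I would expand the right-hand side. Writing $Q_x = \sum_{i=0}^{m-1} q_i \tau^i$ with $q_i = x^{1-q^{k+i}}$ for $0 \le i \le j-1$, $q_j = 1$, and $q_i = x^{1-q^{i-j}}$ for $j+1 \le i \le m-1$, one gets
\begin{equation*}
Q_x \lambda_x \;=\; \sum_{i=0}^{m-1} q_i\, x^{q^i(q^k-1)}\tau^i \;-\; \sum_{i=0}^{m-1} q_i\tau^{i+k},
\end{equation*}
with contributions in degree ranges $[0,m-1]$ and $[k,m+k-1]$. Matching the coefficients of $\tau^\ell$ then splits into several sub-ranges depending on how these four regions overlap, and in each sub-range the equality reduces to an elementary manipulation of exponents. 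For example, for $0 \le \ell \le j-1$ the identity $q_\ell x^{q^\ell(q^k-1)} = x^{1-q^{k+\ell}+q^{k+\ell}-q^\ell} = x^{1-q^\ell}$ matches the $\eta_x$ coefficient on the left; the coefficient of $\tau^m$ equals $-1$ on both sides (coming from $-x^{1-q^0}\tau^m$ on the left and $-q_j \tau^{j+k}$ on the right); and in the middle range, the exponent $1+q^{m+i}-q^i-q^{j+i}$ produced by $\eta_x g(x)\tau^j$ matches $q_{i+j}\,x^{q^{i+j}(q^k-1)}$, while the term $-x^{1-q^{j+i}}$ cancels against the $-q_i \tau^{i+k}$ contribution.

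The main obstacle is bookkeeping: depending on whether $j<k$ or $k<j$, several overlap regions of $\ell$ must be treated separately, and each coefficient is a sum of up to four powers of $x$ with intricate exponents. None of the sub-identities is deep — each boils down to a trivial equality such as $1-q^{k+\ell}+q^{k+\ell}-q^{\ell} = 1-q^\ell$ — but listing all cases cleanly is the bulk of the work. The factorization $\lambda_x = (x^{q^k-1}-x^{q^k-q}\tau)\eta_x$ recorded in the definition offers a useful sanity check, since it yields $\lambda_x(x)=0$ and hence $Q_x\lambda_x(x)=0=\eta_x\phi_T^x(x)$, consistent with $\phi_T^x(x)=0$; it also suggests the alternative approach of rewriting the target as $\eta_x\phi_T^x = Q_x(x^{q^k-1}-x^{q^k-q}\tau)\eta_x$, which can shorten the case analysis by packaging two of the contributions together.
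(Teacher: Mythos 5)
Your proposal is correct and follows essentially the same route as the paper: expand both $\eta_x\phi_T^x$ and $Q_x\lambda_x$ in the twisted polynomial ring, then match coefficients of $\tau^\ell$ degree by degree, with a case split on whether $j<k$ or $k<j$. The paper groups the left-hand side by the degree index $s$ (writing out explicit coefficient sums over the ranges $[0,j-1]$, $[j,k-1]$, $[k,m-1]$, $[m,2m-j-1]$ when $k>j$), whereas you group by the three summands of $\phi_T^x$, but these are the same computation reorganized; your sample verifications (the range $0\le\ell\le j-1$, the coefficient of $\tau^m$, the middle-range cancellation) are all correct, and your observation that one could alternatively use the factorization $\lambda_x=(x^{q^k-1}-x^{q^k-q}\tau)\eta_x$ to prove $\eta_x\phi_T^x=Q_x(x^{q^k-1}-x^{q^k-q}\tau)\eta_x$ is a reasonable variant that the paper does not pursue.
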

\begin{proof}
	 The proof is by direct calculations. Let us first assume that $k> j$. On the one hand,  we have
	\begin{equation*}
	\begin{aligned}
	\eta_x\phi_T^x=&\left(1+x^{1-q}\tau+x^{1-q^2}\tau^2+\cdots+x^{1-q^{k-1}}\tau^{k-1}\right)\left(-\tau^m
	+g(x)\tau^j+1  \right)\\
	=&\sum_{s=0}^{j-1}x^{1-q^s}\tau^s+\sum_{s=j}^{k-1}(x^{1-q^s}+x^{1-q^{s-j}}g(x)^{q^{s-j}})\tau^s\\
	&+\sum_{s=k}^{m-1}\left(x^{1-q^{s-j}}g(x)^{q^{s-j}}\right)\tau^s-\sum_{s=m}^{2m-j-1}x^{1-q^{s-m}}\tau^s.
	\end{aligned}
	\end{equation*}
	On the other hand, we have
	\begin{equation*}
	\begin{aligned}
	Q_x \lambda_x=&\left(x^{1-q^k}+x^{1-q^{k+1}}\tau+\cdots+x^{1-q^{k+j-1}}\tau^{j-1}
	\notag\right.
	\\
	\phantom{=\;\;}
	&\left.+\tau^j+x^{1-q}\tau^{j+1}+\cdots+x^{1-q^{k-1}}\tau^{m-1}\right)\left(x^{q^k-1}-\tau^k\right)\\
	=&\sum_{s=0}^{j-1}x^{1-q^s}\tau^s+\sum_{s=j}^{k-1}x^{1-q^{s-j}}x^{q^{k+s}-q^s}\tau^s\\
	&+\sum_{s=k}^{m-1}\left(-x^{1-q^s}+x^{1-q^{s-j}}\cdot x^{q^{k+s}-q^{s}}\right)\tau^s-\sum_{s=m}^{2m-j-1}x^{1-q^{s-m}}\tau^s.
	\end{aligned}
	\end{equation*}
	By carefully examining coefficients of the relevant $\tau^s$-terms, we see that $\eta_x\phi^x_T$ and $Q_x \lambda_x$ are identical.
	
	If  $k\leqslant j$, the  statement   is proved in a similar manner. 
	\end{proof}
The following theorem is a minor modification of a result obtained in \cite[Proposition 4.2]{Bassa2015}.  
  \begin{thm}\label{Thm:Isolamx}Let $x$, $y\in\bar L$ be nonzero elements   satisfying  $Q_x(y)=x$. Then the twisted polynomial $\lambda_x$ is an isogeny from $\phi^{x}$ to $\phi^{y}$.
 \end{thm}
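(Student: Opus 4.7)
The plan is to prove the isogeny identity $\lambda_x \phi^x_T = \phi^y_T \lambda_x$ in $\bar L\{\tau\}$ by expanding both sides and reducing to a single algebraic identity that I would then extract from $Q_x(y) = x$ via a trace reformulation. I would first expand both products using the twisting rule $\tau^i \cdot a = a^{q^i}\tau^i$. Since $m = j+k$, both $\lambda_x \phi^x_T$ and $\phi^y_T\lambda_x$ are supported on $\{\tau^0, \tau^j, \tau^k, \tau^m, \tau^{m+k}\}$, and the coefficients at $\tau^0, \tau^k, \tau^{m+k}$ coincide automatically (they are $x^{q^k-1}$, $-1$, $1$). Matching the remaining two coefficients reduces the identity to
\[
g(x) = g(y)\,x^{(q^j-1)(q^k-1)} \qquad \text{and}\qquad g(y) - g(x)^{q^k} = x^{q^k-1} - x^{q^m(q^k-1)}.
\]

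The key observation is that, under the substitution $w := y/x^{q^k}$ and $u := y^{q^j}/x$, the relation $Q_x(y) = x$ admits a clean trace form. Indeed, the $j$ terms of $Q_x(y)/x$ indexed by $s = 0, \ldots, j-1$ assemble into $\tr_j(w)$, while the $k$ terms indexed by $s = j, \ldots, m-1$ assemble into $\tr_k(u)$, so $Q_x(y) = x$ is equivalent to
\[
\tr_j(w) + \tr_k(u) = 1.
\]
Raising this to the $q$-th power and subtracting the original relation yields $(w^{q^j} - w) + (u^{q^k} - u) = 0$, i.e.,
\[
u^{q^k} = u + w - w^{q^j}.
\]

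Finally, using the tautological relation $u = w^{q^j} x^{q^m - 1}$ together with $y^{q^j - 1} = u/(wx^{q^k-1})$ and $y^{q^m - 1} = u^{q^k}/w$, I would show that $u^{q^k} = u + w - w^{q^j}$ is equivalent to each of the two displayed identities in the first paragraph, which is a direct substitution in each case. The main obstacle is the trace reformulation: recognizing that the apparently asymmetric expression $Q_x(y) = x$ becomes the symmetric identity $\tr_j(w)+\tr_k(u)=1$ under the substitution $(w, u) = (y/x^{q^k}, y^{q^j}/x)$. Once this is in hand, the $q$-th power trick is immediate and the reduction to the target identities is purely algebraic.
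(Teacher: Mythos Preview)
Your proof is correct and takes a genuinely different route from the paper's. Both arguments begin by expanding $\lambda_x\phi^x_T - \phi^y_T\lambda_x$ and observing that only the $\tau^j$- and $\tau^m$-coefficients are in question. The paper then packages these two coefficients into a single polynomial $G(X) = g_m X^{q^k-1} - g_j$ of degree $q^k - 1$ and shows it has $q^k$ distinct roots: for each of the $q^k$ solutions $h$ of $\lambda_x(h) = y$, one checks $G(h^{q^j})=0$ via the auxiliary identity $\eta_x\phi^x_T = Q_x\lambda_x$ of Lemma~\ref{Lem:etaphiQlam}, so $G\equiv 0$. You instead verify the two coefficient identities by direct substitution, using the trace form $\tr_j(w) + \tr_k(u) = 1$ of the hypothesis---which is precisely the defining equation~\eqref{Eq:TBBGSF} for $\mathcal{F}$, combined with the reformulation $\mathcal{F}(x,y) = Q_x(y)/x - 1$ already recorded just before Notation~\ref{Notation:phix}, so your ``main obstacle'' is in fact free---and then the Frobenius-difference trick yielding $u^{q^k} - u = w - w^{q^j}$. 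Your argument is more elementary in that it avoids Lemma~\ref{Lem:etaphiQlam} and the root-counting device entirely; the paper's argument is slicker in that it bypasses the explicit algebra and makes transparent why the single relation $Q_x(y)=x$ forces two seemingly independent coefficients to vanish at once.
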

 \begin{proof}
We wish to show that $(\lambda_x\phi^x_T-\phi^y_T\lambda_x)$ is identically zero. Note that
 	 	\begin{align*}
 	\lambda_x\phi_T^x&=\left(x^{q^k-1}-\tau^k\right)\left(-\tau^m+g(x)\tau^j+1\right)\\
 	&=\tau^{m+k}-\left({g(x)}^{q^k}+x^{q^k-1}\right)\tau^{m}-\tau^k+{x}^{q^{k-1}}{g(x)}\tau^j+1,
  \end{align*}
and
\begin{align*}
 	 	\phi_T^{y}\lambda_{x}&=\left(-\tau^m+g(y)\tau^j+1\right)\left({x}^{q^k-1}-\tau^k\right)\\
 	&=\tau^{m+k}-\left(g(y)+x^{q^{k+m}-q^m}\right)\tau^{m}-\tau^k+{x}^{q^{m}-q^j}g\left(y\right)\tau^j+1.
 	\end{align*}
  Hence we have
  	\begin{equation}\label{Eq:lamxphixphiy}
 \lambda_x\phi_T^x-\phi_T^{y}\lambda_{x}
 =g_m\tau^m-g_j\tau^j,
  \end{equation}
  where 
  \[g_m:=-\left({g(x)}^{q^k}+x^{q^k-1}\right)+\left(g(y)+{{x}^{q^{k+m}-q^m}}\right),\]
  and
  \[g_j:={x}^{q^{k-1}}{g(x)}-{x}^{q^{m}-q^j}g\left(y\right).\]
 
 We will show that $g_m$ and $g_j\in \bar L$ are both identically zero.  For
this purpose, let us set up a polynomial $G(X):=g_mX^{q^k-1}-g_j \in \bar L[X]$ and it suffices to show that $G(X)=0$.

  Let $H_{x,y}:=\{h^{q^j} |h\in \bar L, \lambda_x(h)=y\}$ be a subset in $\bar L$. Obviously, it has exactly $q^k$ elements.
  
   Take an element $h^{q^j}\in H_{x,y}$.  We observe that
   \begin{align*}
   	h^{q^j}G(h^{q^j})=& \left(\lambda_{x}\phi_T^{x}-\phi^y_T\lambda_x\right)(h)\quad \mbox{(by Equation \eqref{Eq:lamxphixphiy})}\\
   	=&\lambda_{x}\phi_T^{x}(h)
   	-\phi^y_T\lambda_x(h) \\=&
   	(x^{q^k-1}-x^{q^k-q}\tau)\eta_{x}\phi_T^{x}(h) -\phi^y_T(y)
   	\\
   	=&(x^{q^k-1}-x^{q^k-q}\tau)Q_{x}\lambda_{x}(h)\quad\mbox{(by Lemma \ref{Lem:etaphiQlam})}\\
   	=&(x^{q^k-1}-x^{q^k-q}\tau)Q_{x}(y)\\
   	=&(x^{q^k-1}-x^{q^k-q}\tau)(x)
   	=x^{q^k-1} x-x^{q^k-q} x^q=0 .
   	\end{align*}
  
  It follows that $G(h^{q^j})=0$ and thus $G$ has at least   $q^k$ zeros. However, by construction,   $G$ is of   degree   $(q^k-1)$. Hence $G$ must be trivial.  
 \end{proof}
 \begin{cor}\label{Cor:isolmdphiPhi}
 	 Let  $x_1,\ldots,x_n$ be  nonzero elements in $\bar L$ such that  $Q_{x_i}(x_{i+1})=x_i$  for $i=1,\ldots,n-1$.
 	 Then the twisted polynomial $\lambda_{x_{n-1}}\cdots\lambda_{x_1}$ is an isogeny from $\phi^{x_1}$ to $\phi^{x_n}$. 
 	\end{cor}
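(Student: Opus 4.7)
The plan is to reduce this directly to Theorem \ref{Thm:Isolamx} via a simple induction on $n$, using the elementary fact that a composition of isogenies is again an isogeny.

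First I would record the composition lemma: if $\mu\in\bar L\{\tau\}$ satisfies $\mu\phi_T=\psi_T\mu$ and $\nu\in\bar L\{\tau\}$ satisfies $\nu\psi_T=\chi_T\nu$, then
\[
(\nu\mu)\phi_T=\nu(\mu\phi_T)=\nu(\psi_T\mu)=(\nu\psi_T)\mu=(\chi_T\nu)\mu=\chi_T(\nu\mu),
\]
so $\nu\mu$ is an isogeny from $\phi$ to $\chi$. This is immediate from associativity in the twisted polynomial ring $\bar L\{\tau\}$ together with the definition of isogeny.

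Next I would apply Theorem \ref{Thm:Isolamx} to each consecutive pair $(x_i,x_{i+1})$. The hypothesis $Q_{x_i}(x_{i+1})=x_i$ is exactly what that theorem requires, so we obtain, for every $1\leqslant i\leqslant n-1$, that $\lambda_{x_i}$ is an isogeny from $\phi^{x_i}$ to $\phi^{x_{i+1}}$.

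Finally I would conclude by induction on $n$. The base case $n=2$ is Theorem \ref{Thm:Isolamx} itself. For the inductive step, assuming $\lambda_{x_{n-2}}\cdots\lambda_{x_1}$ is an isogeny from $\phi^{x_1}$ to $\phi^{x_{n-1}}$, the composition lemma applied with $\mu=\lambda_{x_{n-2}}\cdots\lambda_{x_1}$ and $\nu=\lambda_{x_{n-1}}$ yields that $\lambda_{x_{n-1}}\cdots\lambda_{x_1}$ is an isogeny from $\phi^{x_1}$ to $\phi^{x_n}$. There is no real obstacle here; the only thing to be mildly careful about is ensuring each $x_i$ is nonzero so that the $\lambda_{x_i}$ are well-defined as written, but that is given in the hypothesis.
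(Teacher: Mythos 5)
Your proposal is correct and follows essentially the same route as the paper: apply Theorem \ref{Thm:Isolamx} to each consecutive pair and compose the resulting isogenies. The paper simply writes the resulting chain of equalities $\lambda_{x_{n-1}}\cdots\lambda_{x_1}\phi^{x_1}_T=\cdots=\phi^{x_n}_T\lambda_{x_{n-1}}\cdots\lambda_{x_1}$ directly rather than packaging it as an explicit induction with a separate composition lemma, but the content is identical.
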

 \begin{proof}
 	We  use  Theorem \ref{Thm:Isolamx} repeatedly:
 	\begin{align*}
 	\lambda_{x_{n-1}}\cdots\lambda_{x_1}\phi^{x_1}_T=&\lambda_{x_{n-1}}\cdots\lambda_{x_2}\phi^{x_2}_T\lambda_{x_1}\\
 	=&\lambda_{x_{n-1}}\cdots\lambda_{x_i}\phi^{x_i}_T\lambda_{x_{i-1}}\cdots\lambda_{x_1}\\
 	=&\phi^{x_n}_T\lambda_{x_{n-1}}\cdots\lambda_{x_1},
 	\end{align*}
 	as claimed.
 \end{proof}
  \section{Drinfeld Modular Curves and their  Generalizations}\label{Sec:DMC}
 \subsection{Drinfeld Modular Curves}By Lemma \ref{Lem:moduleiso}, for a polynomial $ N \in A $ satisfying  $(T-1,N)=1$,     the kernel $ \Ker({\phi_N })$ is isomorphic to  $(A/(NA))^{\oplus m } $ as an $A$-module. 

\begin{notation}
Denote by $ \mathcal{G} (N;\phi )$ the set of all rank $1$ $N$-torsion submodules $G\subseteq \Ker (\phi_N ) $ (i.e.,  $ G \cong A/(NA) $) such that $\xi(G)=G$ for all $\xi\in\Aut(\bar L/L)$.  	

\end{notation}

Let $N$ and $\mathcal{G}(N;\phi )$ be as above. 
	Two pairs $ (\phi, G) $ and $ (\phi', G') $, where $\phi$ and $\phi' $ are Drinfeld modules of the form \eqref{Eq:type}, $G\in \mathcal{G}(N;\phi )$, and $G'\in \mathcal{G}(N;\phi' )$, are said to be \textit{equivalent}, if there exists an isomorphism $\lambda$ from $\phi$ to $\phi'$ such that $\lambda G=G'$. 
	
	The following    three types of Drinfeld modular curves, all  adapted from those of Elkies \cite{Elkies2001}, can be seen as analogues  to   the classical    modular curves which parameterize elliptic curves  associated with certain  level structures.
	
	\begin{defn} 
		\label{Defn:normalizedElkiesModularCurve}Suppose
		that  $ N \in A $ is a polynomial satisfying  $(T-1,N)=1$ and $T|N$. 	
		
		\begin{enumerate}
			\item 		
		The \textit{Drinfeld modular curve} $ X_{m,j}(N) $ with respect to the polynomial $N$ is the  algebraic curve that parameterizes  equivalent classes of pairs $(\phi,G)$, where $\phi$ is a  Drinfeld module  which is isomorphic to a   normalized one   
		and $G\in  \mathcal{G}(N;\phi )$.
 \item
 	The  \textit{Drinfeld modular curve} $ \dot{X}_{m,j}(N) $  is the algebraic curve which parameterizes pairs  $(\phi,G)$, where $\phi\in \mathcal{D}_{m,j}$ is a normalized Drinfeld module and $G\in \mathcal{G}(N;\phi )$.
 \item 
 The  \textit{Drinfeld modular curve} $ \ddot{X}_{m,j}(N) $  is the algebraic curve which parameterizes triples $(\phi,G,x)$, where $\phi\in\mathcal{D}_{m,j}$ is a normalized Drinfeld module, $G\in \mathcal{G}(N;\phi )$, and $x$   (called marked point) is a nonzero element of $G\cap \Ker(\phi_T)\cap L$ (which is isomorphic to $\F_q$).  
\end{enumerate}
\end{defn}

\begin{remark}\label{Rem:J}
	\begin{enumerate}
		\item In the particular case that $ N = 1 $,  the curve $ X_{m,j}(1) $ coincides with the $ J $-line, $J $ being the coordinate that tells the $J $-invariant of  Drinfeld modules (see 
		Lemma \ref{Lem:Isomorphic}).
		\item For the curve $\ddot{X}_{m,j}(N)$ that parameterizes triples $(\phi,G,x)$, we have $\phi=\phi^x$ (see Notation \ref{Notation:phix}).
		\item The $q^m $-Frobenius morphism of $\ddot{X}_{m,j}(N)$ is given below:  \[(\phi,G,x)\mapsto(\phi',G',x'),\] where
		$x'=x^{q^m}$, $\phi'=\phi^{x'}$,    and $G'= G^{q^m}$. In addition and in a similar fashion,  $q^m $-Frobenius morphisms of curves $\dot{X}_{m,j}(N)$ and ${X}_{m,j}(N)$ can be defined. 
		
	\end{enumerate}
\end{remark}

\subsection{Towers and Galois Coverings}
Let us consider the particularly interesting polynomials  $ N= T ^n $, for   $n= 1,2,\cdots $. There associates three natural     towers   of modular curves. The one formed by $\ddot{X}_{m,j}(T^n)$ is drawn below:
\begin{equation*}
\begin{tikzcd}
\ddot{X}_{m,j}(T)    &\ddot{X}_{m,j}(T^2)\arrow[l, "p_1"']  &\ddot{X}_{m,j}(T^3) \arrow[l,"p_2"']   &\cdots \arrow[l,"p_3"'] ,
\end{tikzcd}
\end{equation*} 
where   $\{p_n\}_{n\geqslant 1}$ is defined by
\begin{equation*}
\begin{aligned}
p_n:~\ddot{X}_{m,j}(T^{n+1})&\to\ddot{X}_{m,j}(T^{n})\\
(\phi ,G_{n+1},x_1)&\mapsto(\phi , \phi_TG_{n+1},x_1).
\end{aligned}
\end{equation*} 

The second tower of Drinfeld modular curves  $\dot X_{m,j}(T^n)$ and the third one for  $X_{m,j}(T^n)$ are built similarly.  Moreover, the three towers of curves are organized in the following diagram:
\begin{equation}\label{Eq:threetowers}
\begin{tikzcd}
\ddot{X}_{m,j}(T)\arrow[d,"\pi_1"]   &\ddot{X}_{m,j}(T^2)\arrow[d,"\pi_1"]\arrow[l]  &\ddot{X}_{m,j}(T^3) \arrow[d,"\pi_1"] \arrow[l]   &\cdots \arrow[d,"\pi_1"]\arrow[l] &\ddot{X}_{m,j}(T^n) \arrow[d,"\pi_1"] \arrow[l] &\cdots \arrow[d,"\pi_1"]\arrow[l]\\
\dot{X}_{m,j}(T)\arrow[d,"\pi_2"]   &\dot{X}_{m,j}(T^2)\arrow[d,"\pi_2"]\arrow[l] &\dot{X}_{m,j}(T^3) \arrow[d,"\pi_2"] \arrow[l] &\cdots \arrow[d,"\pi_2"]\arrow[l] &\dot{X}_{m,j}(T^n) \arrow[d,"\pi_2"] \arrow[l] &\cdots \arrow[d,"\pi_2"]\arrow[l]\\
X_{m,j}(T)   &X_{m,j}(T^2)\arrow[l]  &X_{m,j}(T^3)\arrow[l]  &\cdots \arrow[l] &X_{m,j}(T^n) \arrow[l] &\cdots .\arrow[l]
\end{tikzcd}
\end{equation}

{ The vertical  morphisms $\pi_1$ and $\pi_2$  are defined using  their $L$-points as specified below: 
\begin{equation*}
	 \pi_1:\ddot X_{m,j}(T^n)\to  \dot X_{m,j}(T^n),\qquad (\phi,G_n,x_1)\mapsto (\phi,G_n),\end{equation*}
	 {and}
	 \begin{equation*}
\pi_2:\dot X_{m,j}(T^n)\to X_{m,j}(T^n), \qquad (\phi,G_n)\mapsto [(\phi,G_n)].	 
	 \end{equation*} 
	 Let us denote the composition of $\pi_1$ and $\pi_2$
 by
$$
\pi_3:\ddot X_{m,j}(T^n)\to X_{m,j}(T^n),\qquad (\phi,G_n,x_1)\mapsto [(\phi,G_n)].
$$ 

Our  Main Theorem in the introduction  claims that  the three towers above are all recursive. Note that the associated recursive  polynomials coincide with those  investigated by  Bassa \textit{et al.} in \cite{Bassa2015,A.Bassa2014}.}

Let $\ddot{F}_{m,j}^{(n)}$ (resp. $\dot{F}_{m,j}^{(n)}$, $F_{m,j}^{(n)}$) be the function field of $\ddot{X}_{m,j}(T^n)$ (resp. $\dot{X}_{m,j}(T^n)$, $X_{m,j}(T^n)$).  In parallel with  \eqref{Eq:threetowers}, we are able to draw  a diagram of function fields: 
\[
\begin{tikzcd}
\ddot{F}_{m,j}^{(1)} \arrow[r]    &\ddot{F}_{m,j}^{(2)}  \arrow[r] &\ddot{F}_{m,j}^{(3)}\arrow[r]  &\cdots\arrow[r]
&\ddot{F}_{m,j}^{(n)}\arrow[r] &\cdots\\
\dot F_{m,j}^{(1)}\arrow[u]\arrow[r]    &\dot F_{m,j}^{(2)}  \arrow[u] \arrow[r] &\dot F_{m,j}^{(3)} \arrow[u] \arrow[r]  &\cdots \arrow[u]\arrow[r]
&\dot{F}_{m,j}^{(n)}\arrow[u]\arrow[r] &\cdots \\
F_{m,j}^{(1)}\arrow[u]\arrow[r]    &F_{m,j}^{(2)} \arrow[u]   \arrow[r] &F_{m,j}^{(3)}\arrow[u]  \arrow[r]  &\cdots \arrow[u] \arrow[r] &F_{m,j}^{(n)} \arrow[u] \arrow[r] &\cdots .
\end{tikzcd}
\]
In the rest of this section, we establish some facts about    relative degrees of morphisms appeared in Diagram \eqref{Eq:threetowers}. For convenience, we write
$\mathcal{G}_n(\phi):=\mathcal{G}(T^n;\phi)$.
\begin{lem}\label{Lem:pndeg}
	The relative degree of the morphism $p_n:\ddot{X}_{m,j}(T^{n+1})\to \ddot{X}_{m,j}(T^n)$  is $q^{m-1}$. 
\end{lem}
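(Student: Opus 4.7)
The plan is to compute $\deg p_n$ by counting preimages of a generic geometric point. Fix a general $(\phi, G_n, x_1) \in \ddot{X}_{m,j}(T^n)(\bar{L})$; a preimage is a triple $(\phi, G_{n+1}, x_1)$ with $G_{n+1} \in \mathcal{G}_{n+1}(\phi)$ satisfying $\phi_T G_{n+1} = G_n$, while the Drinfeld module $\phi$ and the marked point $x_1$ are unchanged.

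First I would invoke Lemma \ref{Lem:moduleiso}. Since $T^{n+1}$ is coprime to the characteristic $(T-1)$ of $\phi$, the lemma gives the $A$-module identification $\Ker(\phi_{T^{n+1}}) \cong (A/T^{n+1}A)^{\oplus m}$, and likewise $\Ker(\phi_{T^n}) \cong (A/T^n A)^{\oplus m}$. Under this identification the action of $\phi_T$ corresponds to multiplication by $T$ and fits into a surjection
$$\pi: (A/T^{n+1}A)^{\oplus m} \twoheadrightarrow (A/T^n A)^{\oplus m}.$$
Rank-1 cyclic submodules of $(A/T^{n+1}A)^{\oplus m}$ isomorphic to $A/T^{n+1}A$ are precisely those generated by a vector with at least one coordinate in $(A/T^{n+1}A)^{*}$, and so correspond bijectively to the $(A/T^{n+1}A)^{*}$-orbits of such vectors, i.e., to $\mathbb{P}^{m-1}(A/T^{n+1}A)$.

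Next, a direct count over the local ring $A/T^{n+1}A$ (with residue field $\F_q$) yields $|\mathbb{P}^{m-1}(A/T^{n+1}A)| = q^{n(m-1)} N_m$, where $N_m = (q^m-1)/(q-1)$, and similarly $|\mathbb{P}^{m-1}(A/T^n A)| = q^{(n-1)(m-1)} N_m$. Since $\mathrm{GL}_m(A/T^{n+1}A)$ acts transitively on rank-1 cyclic submodules and its action is compatible with $\pi$, the induced map between the two projective spaces is equivariant, so every fiber has the same cardinality; this cardinality must be the quotient $q^{m-1}$. Therefore, for each $G_n$ there are exactly $q^{m-1}$ submodules $G_{n+1}$ with $\phi_T G_{n+1} = G_n$.

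Finally, I would verify that the marked point contributes no extra factor. Because $G_n = \phi_T G_{n+1} \subseteq G_{n+1}$ and $x_1 \in G_n \cap \Ker(\phi_T) \cap L$ by hypothesis, the condition $x_1 \in G_{n+1} \cap \Ker(\phi_T) \cap L$ is inherited automatically, so each of the $q^{m-1}$ submodules $G_{n+1}$ gives rise to a valid triple. The only delicate point, which I expect to be the main technical obstacle, is to confirm that these $q^{m-1}$ combinatorial preimages really correspond to $q^{m-1}$ distinct geometric points of $\ddot{X}_{m,j}(T^{n+1})$ above a generic $\bar L$-point of $\ddot{X}_{m,j}(T^{n})$, i.e., that no two coincide as level structures. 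This is standard for Drinfeld-type level-structure moduli and follows from the fact that the Galois action on the $T^{n+1}$-torsion at a generic moduli point is sufficiently large to distinguish the fibers of $\pi$.
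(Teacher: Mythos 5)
Your argument is correct but takes a genuinely different route from the paper's. The paper fixes a generator $\mu$ of $G_n$, counts the $q^m$ solutions $\nu$ to $\phi_T(\nu)=\mu$, and observes that two such $\nu$ generate the same $G_{n+1}$ precisely when they differ by an element of $G_{n+1}\cap\Ker(\phi_T)\cong A/(TA)$; dividing gives $q^m/q=q^{m-1}$ directly. You instead parameterize all rank-$1$ cyclic $T^{n+1}$-torsion submodules by $\mathbb{P}^{m-1}(A/T^{n+1}A)$, identify the map $G_{n+1}\mapsto\phi_T G_{n+1}$ with the reduction map of projective spaces over local rings, and then use a transitivity argument to equidistribute fibers before taking a ratio of cardinalities. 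Both arguments are valid; the paper's is shorter and works fiberwise from the start, whereas yours exhibits the full combinatorial structure of the covering. Two small things deserve to be made precise in your version: (i) the surjection you write down is not literally multiplication by $T$ on $(A/T^{n+1}A)^{\oplus m}$ but the induced reduction map after identifying $T\cdot(A/T^{n+1}A)^{\oplus m}$ with $(A/T^n A)^{\oplus m}$; the two induce the same map on cyclic submodules, so the conclusion stands, but as written it could confuse a reader. (ii) Your equidistribution argument silently uses that $\GL_m(A/T^{n+1}A)\to\GL_m(A/T^n A)$ is surjective, so that every fiber of $\pi$ can be moved to every other; this is true but should be said. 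Finally, the worry you raise at the end --- whether distinct $G_{n+1}$ give distinct geometric points --- is not a real obstacle here: the presence of the marked point $x_1\neq 0$ forces any isomorphism of a triple with itself to be scalar multiplication by some $\lambda$ fixing $x_1$, hence $\lambda=1$, so the moduli problem is rigid and distinct level structures do give distinct points; the paper omits this check as well.
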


\begin{proof}
	Without loss of generality, we may assume that $ L = \bar{L} $. 
	Let $G_n\in\mathcal{G}_n(\phi)$ be fixed. The lemma is proved if we can show that there are exactly $q^{m-1}$ elements $G_{n+1}\in\mathcal{G}_{n+1}(\phi)$  such that $\phi_T(G_{n+1})=G_n$.
	
	We will   list such $G_{n+1}$ explicitly. First, one can find some  $\mu\in\bar L$ such that $G_n=A\cdot\mu$, because $G_n\cong A/(T^nA)$. Second, consider the set
	$$\mathfrak{S}:=\left\lbrace \nu\in\bar L|\phi_T(\nu)=\mu\right\rbrace.
	$$
	Each $\nu\in \mathfrak{S}$ gives rise to an $A$-module $G_{n+1}=A\cdot \nu\in \mathcal{G}_{n+1}(\phi)$ which certainly satisfies $\phi_T(G_{n+1})=G_n$. It is also easy to see that all solutions $G_{n+1}$ to $\phi_T(G_{n+1})=G_n$ must be of this form.
	
	Finally,   
	$\nu$ and $\nu'\in \mathfrak{S}$ give rise to the same $G_{n+1}$ if and only if $\nu-\nu'\in G_{n+1}\cap\Ker(\phi_T)\cong A/(TA)$. 	
	Therefore,  the number of such $A$-modules $G_{n+1}$ is computed by: 
	$$\frac{\# \mathfrak{S}}{ \# (A/(TA))}=\frac{q^m}{q}=q^{m-1} .$$ This completes the proof.
\end{proof}


For $\mu\in \F_{q^m}^*$, there associates an automorphism on  $\ddot{X}_{m,j}(T^n)$  defined by
\begin{equation}\label{Eq:mudefinedby}
 (\phi,G_n,x_1)\mapsto (\phi^{\mu x_1},\mu G_n, \mu x_1).\end{equation}
 This automorphism is also denoted by  $\mu$.
\begin{lem}\label{Lem:RelDDD}
	\begin{enumerate}
\item The automorphism $\mu$ is compatible with the covering $\pi_3$,   i.e., the following diagram 
\[\begin{tikzcd}
\ddot{X}_{m,j}(T^n) \arrow[dr,"\pi_3"] \arrow[rr,"\mu"] & & \ddot{X}_{m,j}(T^n)\arrow[dl,"\pi_3"']\\
&X_{m,j}(T^n)
\end{tikzcd}
\]
of algebraic curves is commutative. Moreover, if $\mu\in\F_q^*$, then $\mu$ is compatible with $\pi_1$.
\item The morphism $\pi_1:~\ddot{X}_{m,j}(T^n) \to \dot{X}_{m,j}(T^n)$ is a Galois covering whose Galois group  is isomorphic to the multiplicative group $\F_q^*$.
\item 
The morphism $\pi_3:~\ddot{X}_{m,j}(T^n) \to   X_{m,j}(T^n)$ is a Galois covering whose Galois group is isomorphic to the multiplicative group $\F_{q^m}^*$. 	\end{enumerate}
\end{lem}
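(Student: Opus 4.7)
The plan is to verify part (1) by a direct computation in $\bar{L}\{\tau\}$, and then deduce parts (2) and (3) by exhibiting free actions of the claimed groups on the total space and matching orbit sizes against fibre sizes.

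For part (1), I would start from the explicit description $\phi^x_T = -\tau^m + g(x)\tau^j + 1$ with $g(x) = x^{q^m-q^j} - x^{1-q^j}$ given in Notation \ref{Notation:phix}, and compute both sides of $\mu\,\phi^{x_1}_T = \phi^{\mu x_1}_T\,\mu$ for $\mu \in \F_{q^m}^*$. The equality $\mu^{q^m} = \mu$ handles the leading term, while a short calculation shows $\mu\,g(x_1) = g(\mu x_1)\,\mu^{q^j}$, so $\mu$ is an isomorphism from $\phi^{x_1}$ to $\phi^{\mu x_1}$ sending $G_n$ to $\mu G_n$; hence the two triples represent the same point in $X_{m,j}(T^n)$, giving $\pi_3 \circ \mu = \pi_3$. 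For the last clause I would specialize to $\mu \in \F_q^*$: then $\mu^{q^m-q^j} = \mu^{1-q^j} = 1$, so $g(\mu x_1) = g(x_1)$ and $\phi^{\mu x_1} = \phi^{x_1}$; also $\mu\in A$ acts on $G_n$ via $\phi_\mu$, so $\mu G_n = G_n$. This yields $\pi_1 \circ \mu = \pi_1$.

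For parts (2) and (3), I would first argue freeness: the automorphism $\mu$ fixes the marked point only when $\mu x_1 = x_1$, i.e.\ $\mu = 1$, so in both cases the action is free on a dense open set. The remaining task is to match orbit sizes with fibre degrees. For $\pi_1$, the fibre over $(\phi, G_n) \in \dot{X}_{m,j}(T^n)$ is precisely the set of nonzero marked points $x \in G_n \cap \Ker(\phi_T) \cap L \cong \F_q$, which has cardinality $q-1$; since $\F_q^*$ acts freely on this set of size $q-1$ and preserves the fibre by (1), the action is transitive and $\pi_1$ is Galois with group $\F_q^*$. For $\pi_3$, the parallel statement requires the fibre to have cardinality $q^m-1$; I would obtain this by computing $\deg \pi_2 = N_m$ and using $\deg \pi_3 = \deg \pi_1 \cdot \deg \pi_2$.

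The main obstacle I expect is the computation of $\deg \pi_2$. To handle it I would count the orbit of $(\phi^{x_1}, G_n)$ in $\dot{X}_{m,j}(T^n)$ under the action $\lambda \cdot (\phi^{x_1}, G_n) := (\phi^{\lambda x_1}, \lambda G_n)$, which by part (1) is again a normalized Drinfeld module with a chosen submodule, and determine when two parameters $\lambda, \lambda' \in \F_{q^m}^*$ give the same pair. This reduces to the question of when $g(\lambda x_1) = g(\lambda' x_1)$, equivalently when $(\lambda/\lambda')^{q^m - q^j} = (\lambda/\lambda')^{1-q^j} = 1$; using $\gcd(j,k) = 1$ one shows this forces $\lambda/\lambda' \in \F_q^*$, and conversely any such scalar preserves both $\phi^{x_1}$ (as shown in part (1)) and $G_n$ (since $\F_q \subseteq A$ acts through $\phi$). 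Hence each fibre of $\pi_2$ has exactly $(q^m-1)/(q-1) = N_m$ elements, giving $\deg \pi_3 = (q-1) \cdot N_m = q^m - 1$, which matches $|\F_{q^m}^*|$ and completes part (3).
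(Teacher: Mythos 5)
Your part (1) is essentially the paper's computation, and your part (2) is a clean direct argument: the fibre of $\pi_1$ over $(\phi,G_n)$ is the set of nonzero elements of $G_n\cap\Ker(\phi_T)\cap L\cong\F_q$, on which $\F_q^*$ acts freely, hence transitively. This is a genuinely different route from the paper, which proves (3) first and deduces (2) from it; your factorisation $\deg\pi_3=\deg\pi_1\cdot\deg\pi_2$ is natural and arguably easier to follow. (One small improvement: the $\F_{q^m}^*$-action on $\ddot X_{m,j}(T^n)$ is free everywhere, not merely on a dense open, since the marked point is always nonzero.)

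There is, however, a genuine gap in your $\deg\pi_2$ computation. You assert that $g(\lambda x_1)=g(\lambda' x_1)$ is \emph{equivalent} to $(\lambda/\lambda')^{1-q^j}=1$. Since $\lambda^{q^m}=\lambda$ for $\lambda\in\F_{q^m}^*$, one has $g(\lambda x_1)=\lambda^{1-q^j}g(x_1)$, so the correct statement is: $g(\lambda x_1)=g(\lambda' x_1)$ if and only if \emph{either} $g(x_1)=0$ \emph{or} $(\lambda/\lambda')^{1-q^j}=1$. At a supersingular point ($g(x_1)=0$, i.e.\ $\phi_T=-\tau^m+1$) the first alternative always holds, the condition $\phi^{\lambda x_1}=\phi^{\lambda' x_1}$ is vacuous, and the stabiliser analysis collapses: you would instead have to determine directly when $\lambda G_n=\lambda' G_n$. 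Consequently your claim that ``each fibre of $\pi_2$ has exactly $N_m$ elements'' is false as stated; the paper avoids this by treating the supersingular case in a separate paragraph. Your ultimate conclusion is still salvageable, because the degree of a finite morphism of curves is determined by the generic fibre and the ordinary points are dense, but the argument as written needs either an explicit restriction to ordinary points or a separate treatment of the supersingular case along the lines of the paper's.
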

\begin{proof} Note that the statement in part (2) follows  immediately from that of part (3). So we only need to prove parts (1) and (3).
\begin{enumerate}
\item[(1)] Let $\phi$ be the Drinfeld module with $$\phi_T=-\tau^m+g_j\tau^j+1,$$ and hence we have
$$\phi_T^{\mu x_1}=-\tau^m+(\mu^{1-q^j})g_j\tau^j+1 ~\mbox{ and }~\mu \phi_T=\phi_T^{\mu x_1}\mu,$$  by direct calculation. It implies that $[(\phi,G_n)]=[(\phi^{\mu x_1},\mu G_n)]$, i.e. the first statement of part (1). The second statement follows by observing that   $\phi=\phi^{\mu x_1}$ and $G_n=\mu G_n$, if $\mu\in\F_q^*$. 
\item[(3)] 
According to part (1), we only need to show that the degree of $\pi_3$ equals $(q^m-1)$.  

First, consider the situation that   $g_j\neq 0$ in the expression  of $\phi_T$.  Since $j$ and $m$ are coprime, we have $\F_{q^m}^{q^j-1}=\F_{q^m}^{q-1}$. Hence the number of Drinfeld modules of the form   $\phi^{\mu x_1}$, for $\mu\in\F_{q^m}^*$, is equal to $\frac{q^m-1}{q-1}$. 
In the meantime,    the number of nonzero elements in  $\phi_{T^{n-1}}G_n$ is $(q-1)$. Thus the number of preimages of $[(\phi,G_n)]$ under $\pi_3$ is   obtained: $\frac{q^m-1}{q-1}(q-1)=q^m-1$. 

Second, if $g_j=0 $, i.e.,  $\phi_T=-\tau^m+1$, then  $\mu\phi=\phi\mu$, for all $\mu\in \F^*_{q^m}$, and hence $[(\phi,G_n)]=[(\phi,\mu G_n)]$. Moreover, $(\phi,G_n,x_1)=(\phi,\mu G_n,x_1)$ if and only if $\mu\in \F^*_{q}$. Again, by the fact that the number of nonzero elements in  $\phi_{T^{n-1}}G_n$ is $(q-1)$, we get the the number of preimages of $[(\phi,G_n)]$ under $\pi_3$, which is $\frac{q^m-1}{q-1}(q-1)=q^m-1$. 

This shows that
the degree of $\pi_3$ is $q^m-1$ and the assertion is thus confirmed.
\end{enumerate}
\end{proof}

\begin{cor}\label{Cor:DXTn}
	All  horizontal morphisms  $$\ddot{X}_{m,j}(T^{n+1})\to \ddot{X}_{m,j}(T^n),\,~ \dot{X}_{m,j}(T^{n+1})\to \dot{X}_{m,j}(T^n),\,~  \mbox{and }~   {X}_{m,j}(T^{n+1})\to  {X}_{m,j}(T^n)$$ in Diagram \eqref{Eq:threetowers}, have the same relative degree  $q^{m-1}$.	
\end{cor}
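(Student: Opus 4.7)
The plan is to deduce the statement for the middle and bottom rows of Diagram \eqref{Eq:threetowers} from the top row (already handled by Lemma \ref{Lem:pndeg}) via the multiplicativity of degrees in the commutative squares formed by $\pi_1$, $\pi_2$, $\pi_3$, together with the level-by-level Galois information supplied by Lemma \ref{Lem:RelDDD}. The key observation is that by Lemma \ref{Lem:RelDDD}, the vertical morphisms $\pi_1$ at any level $n$ are Galois with group $\F_q^*$, hence of constant degree $q-1$, and similarly $\pi_3$ at any level is Galois with group $\F_{q^m}^*$, hence of constant degree $q^m-1$. Consequently $\pi_2 = \pi_3 \circ \pi_1^{-1}$ also has constant degree $N_m = (q^m-1)/(q-1)$ at each level.

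Concretely, for the $\dot{X}$-tower I consider the commutative square
\[
\begin{tikzcd}
\ddot{X}_{m,j}(T^{n+1}) \arrow[r] \arrow[d,"\pi_1"'] & \ddot{X}_{m,j}(T^{n}) \arrow[d,"\pi_1"] \\
\dot{X}_{m,j}(T^{n+1}) \arrow[r] & \dot{X}_{m,j}(T^{n})
\end{tikzcd}
\]
and compare the two ways of computing the degree of the composition $\ddot{X}_{m,j}(T^{n+1}) \to \dot{X}_{m,j}(T^{n})$: going along the top and then down gives $q^{m-1} \cdot (q-1)$ by Lemma \ref{Lem:pndeg} and Lemma \ref{Lem:RelDDD}(2), while going down and then along the bottom gives $(q-1) \cdot \deg(\dot{X}_{m,j}(T^{n+1}) \to \dot{X}_{m,j}(T^{n}))$. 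Cancelling the common factor $(q-1)$ yields the claim for the middle row.

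The bottom row is obtained in exactly the same fashion by replacing $\pi_1$ with $\pi_3$ in the square, using Lemma \ref{Lem:RelDDD}(3) to see that both vertical arrows have degree $q^m-1$, and again cancelling to extract $\deg\bigl(X_{m,j}(T^{n+1}) \to X_{m,j}(T^{n})\bigr) = q^{m-1}$. The only conceivable obstacle is ensuring that the vertical degrees really are constant in $n$, but this is precisely the content of Lemma \ref{Lem:RelDDD}, whose proof was performed at a fixed but arbitrary level; no extra ingredient beyond multiplicativity of degrees in towers of function field extensions is required.
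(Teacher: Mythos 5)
Your argument is correct and is essentially the paper's: the paper's proof simply cites Lemma \ref{Lem:pndeg}, Lemma \ref{Lem:RelDDD}, and commutativity of Diagram \eqref{Eq:threetowers}, and your write-up just spells out the resulting degree bookkeeping in the two commutative squares.
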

\begin{proof}
	The conclusion follows directly by Lemmas  \ref{Lem:pndeg}, \ref{Lem:RelDDD}, and the commutativity of Diagram \eqref{Eq:threetowers}.
\end{proof}

\subsection{Modular Curves $\ddot{M}_{m,j}(T^n)$}


In subsequent analysis, we assume that $k$ ($=m-j$) is not divided by  the characteristic $p$ of $\F_q$.  
Let $A_k:=\F_{q^k}[T]$ be the obvious  extension of the ring $A=\F_q[T]$, and moreover, we treat $\bar{L}$ as 
an $A_k$-field in an obvious way. 
Set
\[F(T):=1-(1-T)^k=T\cdot f(T)\in A,\]
where
\[f(T):=\sum_{i=1}^k\binom{k}{i}(-T)^{i-1}.\]
Evidently, $f(0)=k\neq 0$ and $(T,f(T))=1$.

For a normalized Drinfeld module $\phi$ as in \eqref{Eq:phiTandphiT'}, there associates another Drinfeld module 
\begin{equation*}
	\Phi:~A\to L\{\tau\},\qquad T\mapsto \phi_{F(T)}
\end{equation*}
over $L$. In other words,  
\begin{align*}
\Phi_T &=\phi_{F(T)} =\phi_{1-(1-T)^k} =1-\phi_{(1-T)^k}=1-(\tau^m-g_j\tau^j)^k\\
&=1-\sum_{i=0}^{k}\left[ k \atop i\right]\tau^{mi}\cdot\tau^{j{(k-i)}}\\
&=1-\sum_{i=0}^{k}\left[ k \atop i\right]\tau^{(i+j)k}, 
\end{align*}
 where the coefficients are  defined by  
\begin{equation*}
\begin{aligned}
\left[ k \atop k\right]=&1; \quad \left[ k \atop 0\right]=(-1)^{k}g_j^{\tr_k(q^j)}; \quad \text{and iteratively,}\\
\left[ k \atop i\right]=&{\left[ k-1 \atop i-1\right]}^{q^m}-g_j{\left[ k-1 \atop i\right]}^{q^j},\quad  \text{for $1\leqslant i\leqslant k-1$}.\\
\end{aligned}
\end{equation*}	
The $\tau$-twisted polynomial $\Phi_T$ can be regarded as a $\tau^k$-twisted polynomial.
Therefore,   one can alternatively treat $\Phi$ as a Drinfeld module over the $A_k$-field $\bar{L}$ of characteristic $(T-1)$. Recall that $\phi$ gives rise to an $A$-module structure on $\bar L$. Similarly, $\Phi$ gives rise to an $A_k$-module structure on $\bar L$. According to Lemma \ref{Lem:moduleiso}, we have 
\begin{equation}\label{Eq:kerPhi_Ak}
	\Ker(\Phi_{T^n})\cong (A_k/(T^nA_k))^{\oplus m},\qquad\mbox{as an $A_k$-submodule  of $\bar L$},
\end{equation}
and
\begin{equation*}
	\Ker(\Phi_{T^n})\cong (A/ (T^n f(T)^nA))^{\oplus m},\qquad\mbox{as an $A$-submodule  of $\bar L$}.
\end{equation*}

\begin{notation}\label{Notation:mathcalE} Let $\Phi$ be the Drinfeld module arising from $\phi$  explained as above. 
Denote by $\mathcal{E}_n(\phi)$   the set consisting of $\F_{q^k}$-vector spaces $E_n\subseteq \Ker(\Phi_{T^n})$, such that \begin{itemize}
	\item[1)] $E_n$ is stable under     $\Aut(\bar L/L)$,   i.e., $\xi(E_n)=E_n$ for all $\xi\in\Aut(\bar L/L)$;
	\item[2)] $	E_n\cong A/(F(T)^nA)$  {as an $A$-submodule of } $\bar L$; and
	\item[3)] $E_n\cong A_k/(T^nA_k)$  as an $A_k$-submodule of $ \bar L$.
	\end{itemize}
\end{notation}

\begin{defn}
	The \textit{twisted Drinfeld modular curve} $\ddot{M}_{m,j}(T^n)$ is the algebraic curve that parameterizes triples $(\phi,E_n,x)$, where $\phi\in \mathcal{D}_{m,j}$,  $E_n\in \mathcal{E}_n(\phi)$,  
	and  $x\in \phi_{T^{n-1} f(T)^n}E_n\cap L$ is a nonzero marked point.  
\end{defn}

\begin{remark}
	If $j=m-1$ $\left(\text{or } k=1\right)$, then $\Phi=\phi$ and the  {twisted Drinfeld modular curve}  $\ddot M_{m,{m-1}}(T^n)$   coincides with the  Drinfeld modular curve $\ddot X_{m,{m-1}}(T^n)$ (see Definition \ref{Defn:normalizedElkiesModularCurve}).  
\end{remark}

The following key theorem  is needed. 
\begin{thm}\label{Thm:IsomphicXM}
	The curves $\ddot{M}_{m,j}(T^n)$ and $\ddot{X}_{m,j}(T^n)$  are isomorphic over $\F_{q^m}$.
\end{thm}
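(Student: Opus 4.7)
The plan is to establish the isomorphism by exhibiting mutually inverse, $\F_{q^m}$-rational morphisms between the two moduli curves at the level of functors of points. The natural forward morphism $\Psi \colon \ddot{M}_{m,j}(T^n) \to \ddot{X}_{m,j}(T^n)$ should contract the $A_k$-level structure $E_n$ back to a rank-$1$ $A$-torsion submodule by applying $\phi_{f(T)^n}$. The critical structural input is that the assumption $p \nmid k$ forces $f(0) = k \neq 0$, so $T$ and $f(T)$ are coprime in $A$; this yields the Chinese Remainder decomposition
\[
\Ker(\Phi_{T^n}) \;=\; \Ker(\phi_{T^n f(T)^n}) \;=\; \Ker(\phi_{T^n}) \oplus \Ker(\phi_{f(T)^n})
\]
as $A$-modules via $\phi$, which serves as the bridge between the two types of level structures.

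For the forward map I send $(\phi, E_n, x) \mapsto (\phi, G, x)$ with $G := \phi_{f(T)^n}(E_n)$. Well-definedness is checked as follows: by condition (2) of Notation \ref{Notation:mathcalE}, pick a generator $v \in E_n$ with $\phi$-annihilator $(F(T)^n A) = (T^n f(T)^n A)$; then $\phi_{f(T)^n}(v)$ has $\phi$-annihilator $(T^n A)$, so $G$ is a rank-$1$ submodule of $\Ker(\phi_{T^n})$ belonging to $\mathcal{G}_n(\phi)$, with Galois-stability inherited from $E_n$. For the marked point, note $\phi_{T^{n-1}}(G) = \phi_{T^{n-1} f(T)^n}(E_n)$, so the condition $x \in \phi_{T^{n-1} f(T)^n}(E_n) \cap L$ translates precisely into $x \in G \cap \Ker(\phi_T) \cap L$, exactly as required by $\ddot{X}_{m,j}(T^n)$. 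Rationality over $\F_{q^m}$ is immediate since every ingredient is defined over $\F_{q^m}$.

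For the inverse $\Theta$, the CRT decomposition implies that $\phi_{f(T)^n}$ restricts to an $A$-linear automorphism of $\Ker(\phi_{T^n})$; let $\phi_h$ denote its inverse, where $h \in A$ satisfies $h f(T)^n \equiv 1 \pmod{T^n}$. Given $(\phi, G, x)$, define the canonical lift $G^{\dagger} := \phi_h(G) \subseteq \Ker(\phi_{T^n})$, an $A$-submodule isomorphic to $G$, and set
\[
E_n \;:=\; A_k \cdot G^{\dagger} \;\subseteq\; \Ker(\Phi_{T^n}),
\]
the $A_k$-submodule generated by $G^{\dagger}$ under the $\Phi$-action---equivalently, the $\F_{q^k}$-span of the $\Phi_T$-iterates of $G^{\dagger}$. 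One then checks that $E_n$ satisfies all three conditions of Notation \ref{Notation:mathcalE}, that $\phi_{f(T)^n}(E_n) = G$, and that the marked-point datum is preserved. Galois-stability and $\F_{q^m}$-rationality of $\Theta$ are formal, using $\F_{q^k} \subseteq \F_{q^m} \subseteq L$.

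The main obstacle is verifying that $\Psi \circ \Theta$ and $\Theta \circ \Psi$ are identities, which reduces to showing that $A_k \cdot G^{\dagger}$ has cardinality exactly $q^{kn}$ and simultaneously realizes the $A$-isomorphism class $A/(F(T)^n A)$ (via $\phi$) together with the $A_k$-isomorphism class $A_k/(T^n A_k)$ (via $\Phi$). The delicacy here is that the $\phi$- and $\Phi$-structures on $\bar L$ are genuinely different---$T$ acts as $\phi_T$ in the first and as $\phi_{Tf(T)}$ in the second---and the summand $\Ker(\phi_{T^n})$ is not itself $\F_{q^k}$-stable. The required dimension count must therefore rely on showing that the $\F_{q^k}$-closure of the $\F_q$-space $G^{\dagger}$ inside $\Ker(\Phi_{T^n})$ has the expected rank, which in turn uses $\gcd(j,m) = 1$ and the coprimality of $T$ and $f(T)$. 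Once this structural verification is in hand, the resulting bijection on geometric points together with $\F_{q^m}$-rationality yields the desired isomorphism of curves.
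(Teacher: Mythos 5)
Your set‑up is essentially the paper's: you define the forward morphism $\Psi\colon(\phi,E_n,x)\mapsto(\phi,\phi_{f(T)^n}E_n,x)$ exactly as the paper's $\alpha$, and your well‑definedness check for $\Psi$ (the annihilator computation and the translation of the marked‑point condition) is correct. Your inverse $\Theta$ differs cosmetically: you first apply $\phi_h$ (with $hf(T)^n\equiv 1\pmod{T^n}$) to $G$ and then take the $A_k$‑span. Since $G^\dagger=\phi_h(G)$ is already an $A$‑submodule and $\Phi_T=\phi_{Tf(T)}$ lies in the image of $\phi$, your $A_k\cdot G^\dagger$ collapses to the ordinary $\F_{q^k}$‑span $\F_{q^k}\langle G^\dagger\rangle$; the paper's $\beta$ shows the $\phi_h$ preprocessing is unnecessary, because $\F_{q^k}\langle G\rangle$ already maps onto $G$ under $\phi_{f(T)^n}$ once one knows it lies in $\mathcal{E}_n(\phi)$ — containment plus a cardinality count does the rest.

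The genuine gap is that you identify but do not prove the central structural claim: that the candidate $E_n$ (whether $\F_{q^k}\langle G\rangle$ or $\F_{q^k}\langle G^\dagger\rangle$) actually belongs to $\mathcal{E}_n(\phi)$, i.e.\ has $\F_q$‑dimension $kn$, is $A$‑isomorphic to $A/(F(T)^nA)$ via $\phi$, and is $A_k$‑isomorphic to $A_k/(T^nA_k)$ via $\Phi$. You explicitly flag this as ``the main obstacle'' and close with ``once this structural verification is in hand,'' but this verification is precisely the content of the paper's Lemma~\ref{lem:xbusiness} and Lemma~\ref{Lem:welldefineness}, and it is where all the real work lives. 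In particular, the paper establishes (i) that $\F_{q^k}\cdot x$ is a $\phi$‑stable $A$‑submodule with annihilator exactly $(F(T))$, via a normal‑basis/circulant‑permutation argument that crucially uses $\gcd(j,k)=1$; and then (ii) an inductive module‑theoretic argument propagating this from level~$1$ to level~$n$, carefully tracking both the $A$‑ and $A_k$‑structures. Your CRT decomposition $\Ker(\Phi_{T^n})\cong\Ker(\phi_{T^n})\oplus\Ker(\phi_{f(T)^n})$ is a correct $A$‑module statement, but — as you yourself note — the summand $\Ker(\phi_{T^n})$ is not $\F_{q^k}$‑stable, so CRT alone does not deliver the dimension count. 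Without carrying out the analogue of Lemmas~\ref{lem:xbusiness} and~\ref{Lem:welldefineness}, the proof of mutual inverseness is incomplete.
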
 
A direct consequence of this theorem is   that the function field of 
twisted Drinfeld modular curves and that of normalized Drinfeld modular curves
are one and the same. 
Before we come to the proof of this theorem, let us establish some useful  lemmas.
\begin{lem}\label{lem:xbusiness}
Let   $x\in \bar L$ be nonzero. 
Endow $\bar L$ with the $A$-module  structure induced by the Drinfeld module   $\phi=\phi^x$. We have
\begin{enumerate}
	\item 
	The $\F_q$-vector space $\F_{q^k}\cdot x$ is an $A$-submodule of $\bar L$; 
	\item The annihilator ideal of $\F_{q^k}\cdot x$ is generated by $F(T)$;
	\item The $A$-modules $\F_{q^k}\cdot x$ and $A/(F(T)A)$ are isomorphic.
\end{enumerate}  \end{lem}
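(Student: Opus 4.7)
My plan is to reduce the question to a computation inside $\F_{q^k}$ via the explicit form of $\phi_T^x$. First, I would substitute $y=cx$, $c\in\F_{q^k}$, into $\phi_T^x(y)=-y^{q^m}+g(x)y^{q^j}+y$, and use the elementary identity $c^{q^m}=c^{q^j}$ (valid since $m=j+k$ and $c^{q^k}=c$) to obtain
\[\phi_T^x(cx)=c^{q^j}\bigl(-x^{q^m}+g(x)x^{q^j}\bigr)+cx.\]
The hypothesis $\phi_T^x(x)=0$ rewrites the bracketed term as $-x$, giving $\phi_T^x(cx)=(c-c^{q^j})\,x$. Hence $\F_{q^k}\cdot x$ is stable under the $A$-action (part (1)), and the bijection $cx\leftrightarrow c$ transports the action of $T$ to the $\F_q$-linear endomorphism $\sigma:=\id-\tau_j$ of $\F_{q^k}$, where $\tau_j(c)=c^{q^j}$ is the $j$-th power Frobenius.

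The second step is to recognize $\sigma$ via Galois theory. Because $(j,k)=1$, $\tau_j$ generates $\Gal(\F_{q^k}/\F_q)$ and has order $k$; the normal basis theorem then provides $c_0\in\F_{q^k}$ whose $\tau_j$-orbit $\{c_0,\tau_j c_0,\ldots,\tau_j^{k-1}c_0\}$ forms an $\F_q$-basis of $\F_{q^k}$. Consequently, $\tau_j$ has minimal polynomial $T^k-1$, so $\sigma=\id-\tau_j$ has minimal polynomial $1-(1-T)^k=F(T)$. The annihilator of $\F_{q^k}\cdot x$ is the kernel of the $\F_q$-algebra map $A\to\End_{\F_q}(\F_{q^k})$ sending $T\mapsto\sigma$, which is therefore $(F(T))$, establishing part (2).

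For part (3), the same $c_0$ does the job: since $\F_q[\sigma]=\F_q[\tau_j]$, the orbit $\{\sigma^i c_0\}_{i=0}^{k-1}$ spans the same $\F_q$-subspace as $\{\tau_j^i c_0\}_{i=0}^{k-1}$, namely all of $\F_{q^k}$, so $\F_{q^k}$ is a cyclic $\F_q[\sigma]$-module with generator $c_0$. Transferring back, $c_0 x$ is an $A$-cyclic generator of $\F_{q^k}\cdot x$, and combining with part (2) yields $\F_{q^k}\cdot x\cong A/(F(T)A)$. The subtle point worth flagging is that $x$ itself is annihilated by $T$ (because $\phi_T^x(x)=0$), so naively $A\cdot x=\F_q\cdot x\subsetneq \F_{q^k}\cdot x$; the normal basis theorem is therefore genuinely necessary to produce a cyclic $A$-generator, and this is the main non-trivial ingredient of the argument.
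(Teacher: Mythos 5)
Your proof is correct and takes essentially the same route as the paper: compute the $T$-action (the paper instead computes the $(1-T)$-action, which comes out as the pure Frobenius $\tau_j$, a cosmetic difference), invoke the normal basis theorem together with $\gcd(j,k)=1$ to identify the minimal polynomial, and observe that the normal-basis element gives a cyclic $A$-generator of $\F_{q^k}\cdot x$. Your closing remark about why $x$ itself cannot serve as a cyclic generator is a nice piece of emphasis not spelled out in the paper but implicit in its use of $\rho x$ rather than $x$.
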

\begin{proof} (1)
The $A$-module structure of $\F_{q^k}\cdot x$ is presented by
	\begin{align*}
	(1-T)\cdot (\mu x):=&\phi_{1-T}(\mu x)=(\tau^m-g(x)\tau^j)(\mu x)\\
	=&\mu^{q^m}x^{q^m}-g(x)\mu^{q^j}x^{q^j}=-\mu^{q^j}\phi_T(x)+\mu^{q^j}x=\mu^{q^j}x,
	\end{align*}
	for all $\mu\in\F_{q^k}$. 
	
	(2) Let $\{\rho, \rho^q,\ldots,\rho^{q^{k-1}}\}$ be a normal basis of $\F_{q^k}/{\F_q}$. Since $k$ and $j$ are coprime, the action by $(1-T)$  on $\F_{q^k}\cdot x$ is	 a circulant permutation to this basis. Thus  the minimal polynomial of $(1-T)$  equals   
$(\lambda^k-1)$.  
This means that the annihilator ideal of $\F_{q^k}\cdot x$  is generated by  $(1-T)^k-1=-F(T)$. 

(3) By construction, $\F_{q^k}\cdot x$ and $A\cdot \rho x$ are identical. It follows that $\F_{q^k}\cdot x\cong A/{(F(T)A)}$. 
\end{proof}
Recall the notation $  \mathcal{E}_n(\phi)$ that we introduced earlier in Notation \ref{Notation:mathcalE}.
\begin{lem}\label{Lem:welldefineness}
	For $G_n\in \mathcal{G}_n(\phi)$, the $\F_{q^k}$-subspace of  $\bar L$ spanned by $G_n$, denoted by $\F_{q^k}\left\langle G_n\right\rangle$, belongs to $  \mathcal{E}_n(\phi)$. 
	\end{lem}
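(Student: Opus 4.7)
Let $E_n := \F_{q^k}\langle G_n\rangle$. My plan is to verify, in order, the base containment $E_n \subseteq \Ker(\Phi_{T^n})$ and the three conditions in Notation~\ref{Notation:mathcalE}, in the order~(1), (3), (2). The containment and condition~(1) are the straightforward part: the explicit formula preceding Notation~\ref{Notation:mathcalE} shows $\Phi_T = \phi_{F(T)}$ is a polynomial in $\tau^k$ with $\bar L$-coefficients, so it commutes with $\F_{q^k}$-scalar multiplication; combined with $\Phi_{T^n}(g) = \phi_{f(T)^n}\phi_{T^n}(g) = 0$ for $g\in G_n$, this forces $\Phi_{T^n}$ to vanish on $E_n$. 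Galois stability follows from $\F_{q^k}$ being the fixed field of $x\mapsto x^{q^k}$ in $\bar L$, together with the stability of $G_n$ under $\Aut(\bar L/L)$.

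For condition~(3), I would first check closure of $E_n$ under $\phi_T$ using the identity $\mu^{q^m} = \mu^{q^j}$ for $\mu\in\F_{q^k}$ (a consequence of $\mu^{q^k} = \mu$ and $m = j+k$), which yields the commutation rule $\phi_T(\mu g) = \mu^{q^j}\phi_T(g) + (\mu - \mu^{q^j})g$. Picking $y\in G_n$ with $\phi_{T^{n-1}}(y)\neq 0$ (an $A$-generator of $G_n$) and using that $f(T)$ is a unit in $A/(T^nA)$ (since $f(0) = k \neq 0$), the iterates $\Phi_T^i(y) = \phi_{T^if(T)^i}(y)$ for $i=0,\ldots,n-1$ form an $\F_q$-basis of $G_n$; hence their $\F_{q^k}$-span is $E_n$, so $E_n = A_k\cdot y$ under the $\Phi$-action. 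Since $\Phi_{T^n}(y) = 0$ while $\Phi_{T^{n-1}}(y) = k^{n-1}\phi_{T^{n-1}}(y)\neq 0$ (using $p\nmid k$), the $A_k$-annihilator of $y$ is exactly $T^nA_k$, and $E_n\cong A_k/(T^nA_k)$.

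Condition~(2) is the main technical step and the expected obstacle, because the $\phi$-action of $T$ does not commute with $\F_{q^k}$-multiplication, so the $A$-module structure of $E_n$ does not restrict directly from the $A_k$-module structure. The bridge is Lemma~\ref{lem:xbusiness} applied to $x := \phi_{T^{n-1}}(y)\in\Ker(\phi_T)\setminus\{0\}$: since $\phi = \phi^x$ in the sense of Notation~\ref{Notation:phix}, the lemma identifies $\F_{q^k}\cdot x$ as an $A$-submodule of $\bar L$ cyclic of annihilator $F(T)$, hence $\cong A/(F(T)A)$. By condition~(3), the $F(T)$-torsion $E_n[F(T)] = \Ker(\Phi_T|_{E_n})$ corresponds under the $A_k$-isomorphism to $T^{n-1}A_k/(T^nA_k)$, a $1$-dimensional $\F_{q^k}$-subspace of order $q^k = |\F_{q^k}x|$; together with the obvious inclusion $\F_{q^k}x\subseteq E_n[F(T)]$, this pins down $E_n[F(T)] = \F_{q^k}x \cong A/(F(T)A)$ as an $A$-module. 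Now decompose $E_n = \bigoplus_\alpha M_\alpha$ into $q_\alpha$-primary components over the distinct irreducible factors $q_\alpha$ of $F(T)$ (of multiplicities $c_\alpha$); each $M_\alpha[q_\alpha^{c_\alpha}]$ is a direct summand of the cyclic module $E_n[F(T)]$, hence itself cyclic, and by the structure theorem over the local principal Artinian ring $A/(q_\alpha^{nc_\alpha})$ this forces each $M_\alpha$ itself to be cyclic. Thus $E_n$ is a cyclic $A$-module annihilated by $F(T)^n$, and the order comparison $|E_n| = q^{nk} = |A/(F(T)^nA)|$ yields $E_n \cong A/(F(T)^nA)$.
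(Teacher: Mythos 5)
Your proof is correct, and it diverges from the paper's on the two structural conditions in an instructive way. For the $A_k$-isomorphism $E_n\cong A_k/(T^nA_k)$, the paper picks $u\in G_n$ with $\Phi_{T^{n-1}}(u)\neq 0$, quotes the isomorphism $\Ker(\Phi_{T^n})\cong (A_k/T^nA_k)^{\oplus m}$ to read off $\Ann_{A_k}(u)=(T^n)$, and finishes by comparing $\F_{q^k}$-dimensions; you instead exhibit $\{\Phi_T^i(y)\}_{i=0}^{n-1}$ explicitly as an $\F_q$-basis of $G_n$ via the unitriangular change of basis (with diagonal entries $k^i\neq 0$, which is where $p\nmid k$ enters), which is more hands-on but amounts to the same dimension count. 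The real divergence is in establishing $E_n\cong A/(F(T)^nA)$. Both proofs use Lemma~\ref{lem:xbusiness} to identify $\F_{q^k}\cdot x\cong A/(F(T)A)$ for $x\in\Ker(\phi_T)\cap E_n$, but the paper pushes $E_n$ \emph{down} via $\phi_{F(T)^{n-1}}$ onto $\F_{q^k}x$, lifts a generator $\rho x$ to $v\in E_n$, and asserts $\Ann_A(v)=(F(T)^n)$ (this last step is terse in the paper and implicitly uses the squarefreeness of $F$ to rule out annihilators dividing $F^n$ but not equal to $F^n$); you instead intersect $E_n$ with the $F(T)$-torsion to pin down $E_n[F(T)]=\F_{q^k}x$, and then run a primary-decomposition argument to show each component is cyclic. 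Your route makes explicit the module-theoretic work the paper leaves implicit, at the cost of invoking the structure theorem over each $A/(q_\alpha^{nc_\alpha})$; the paper's is shorter once one accepts the annihilator claim. (One could simplify your primary-decomposition step by noting that $F$ is separable since $p\nmid k$, so all $c_\alpha=1$, though your argument works without this.)
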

\begin{proof}According to the definition of $\mathcal{E}_n(\phi)$, we divide the proof  into three parts.
	\begin{enumerate}
		\item For any $\xi\in\Aut(\bar L/L)$,  we have $\xi(G_n)=G_n$, by definition of $\mathcal{G}_n(\phi)$.
		For $ \sum_i \mu_i g_i \in \F_{q^k}\left\langle G_n\right\rangle $, we see that 
		\[\xi(\sum_i \mu_i g_i)=\sum_i \xi(\mu_i)\xi(g_i) \in \F_{q^k}\left\langle G_n\right\rangle,\]
		which means that  $\Aut(\bar L/L)$ preserves  $\F_{q^k}\left\langle G_n\right\rangle$.
		\item Regarding the $A_k$-module structure, 		
		we first examine that $\F_{q^k}\left\langle G_n\right\rangle$ is closed under the $\Phi_T$-action. For $ \sum_i \mu_i g_i \in \F_{q^k}\left\langle G_n\right\rangle $, we have
		$$
		\Phi_{T}(\sum_i \mu_i g_i)= \sum_i \mu_i\phi_{F(T)}(g_i)\in \F_{q^k}\left\langle G_n\right\rangle .
		$$ 
		
		We next show that $\F_{q^k}\left\langle G_n\right\rangle\cong A_k /(T^n A_k)$. This fact is due to the following  observations:
		\begin{enumerate}
			\item The subspace $ \F_{q^k}\left\langle G_n\right\rangle$ is contained in $\Ker (\Phi_{T^n}) $. This is easy as
					\[ \Phi_{T^n}(\sum_i \mu_i g_i )= \sum_i \mu_i \Phi_{T^n}(g_i)= \sum_i \mu_i \phi_{f(T)^n}\phi_{T^n}(g_i)=0. \] 
			\item We can find  $ u \in G_n  $ such that $ \Phi_{T^{n-1}}(u) \not = 0 $. In fact,  we have
			\[ \Phi_{T^{n-1}} (G_n) = \phi_{T^{n-1}} \phi_{f(T)^{n-1}}(G_n) 
			= \phi_{T^{n-1}} (G_n ) \not = \{ 0 \},
			\]
			where we used the fact that $G_n\cong A/(T^nA)$ and $(T,f(T))=1$.

			\item  We show that $A_k/(T^n A_k)\cong A_k \cdot u $. In fact, by (a) and the isomorphism in \eqref{Eq:kerPhi_Ak}, $$u\in 
			\Ker (\Phi_{T^n}) \cong (A_k /(T^n A_k))^{\oplus m}.$$ 
			By (b), one has $\Ann(u)=(T^n)$, and thus $A_k\cdot u=A_k/\Ann(u)=A_k/(T^n A_k)$.
			 
			\item As $G_n\cong A/(T^nA)$, we have $\dim_{\F_q}(G_n)=n$.  Comparing the $F_{q^k}$-dimensions of $A_k\cdot u\subseteq \F_{q^k}\left\langle G_n\right\rangle$,   we see that they must be equal.
		\end{enumerate}

		\item Regarding the $A$-module structure of $G_n$, we examine that $\F_{q^k}\left\langle G_n\right\rangle$ admits a  $\phi_T$-action. In fact, for $ \sum_i \mu_i g_i \in \F_{q^k}\left\langle G_n\right\rangle $, we have
		\[\phi_T(\sum_i \mu_i g_i)=\sum_i\mu_i^{q^j}\left(\phi_T(g_i)+(u-u^{q^j})g_i\right)\in \F_{q^k}\left\langle G_n\right\rangle.\]
		
		We now show that $\F_{q^k}\left\langle G_n\right\rangle\cong A/(F(T)^nA)$ as $A$-modules. This is accomplished   following three steps.
		\begin{enumerate}
			\item By what we concluded in the second part,  we have $A_k/(TA_k)\cong \Phi_{T^{n-1}}(\F_{q^k}\left\langle G_n \right\rangle )$ as $A_k$-modules. This implies that $\Phi_{T^{n-1}}(\F_{q^k}\left\langle G_n \right\rangle )=\F_{q^k}\cdot x $, for some $x\in \bar L$.
			\item According to Lemma \ref{lem:xbusiness}, $$\phi_{F(T)^{n-1}}(\F_{q^k}\left\langle G_n \right\rangle)=\Phi_{T^{n-1}}(\F_{q^k}\left\langle G_n \right\rangle )=A\cdot\rho x\cong A/(F(T)A),$$ as $A$-modules.  And we are able to find some $v\in \F_{q^k}\left\langle G_n \right\rangle $ such that $$\phi_{F(T)^{n-1}}(v)=\Phi_{T^{n-1}}(v)=\rho x.$$ Because $\Ann(\rho x)=(F(T))$, we have $\Ann(v)=(F(T)^n)$ and $$A \cdot v\cong A/(\Ann(v))=A/(F(T)^n A).$$
			\item It follows from the previous argument that  the  dimension  of the $\F_q$-vector space  $A\cdot v$ is $q^{nk}$, the same as that of $\F_{q^k}\left\langle G_n \right\rangle$. So one must have $A\cdot v=\F_{q^k}\left\langle G_n \right\rangle$. 
					\end{enumerate}
		 
	\end{enumerate} 
This completes the proof.
\end{proof} 

We are now ready to give the 
\begin{proof}[Proof of Theorem \ref{Thm:IsomphicXM}]  We construct a map
	 \begin{align*}
	\alpha:\ \ddot{M}_{m,j}(T^n)&\to \ddot{X}_{m,j}(T^n),\\
	(\phi,E_n,x)&\mapsto (\phi,\phi_{f(T)^n}E_n,x).
	\end{align*}
 	Here we have used the fact that \[A/\left({T^nA}\right)\cong \phi_{f(T)^n}E_n\in \mathcal{G}_n(\phi),  \]
	which is due to the definition of $E_n$. 	
	In the mean time, we construct	 	\begin{align*}
	\beta:\ \ddot{X}_{m,j}(T^n)&\to \ddot{M}_{m,j}(T^n),\\
	(\phi,G_n,x)&\mapsto (\phi,\F_{q^k}\left\langle G_n\right\rangle,x).
	\end{align*}
	 
By Lemma \ref{Lem:welldefineness}, the map  $\beta$ is well-defined. 	We now prove that $\alpha$ and $\beta$ are mutually inverse maps.
	\begin{enumerate}
	\item	
	First, we show that $\alpha\circ \beta =\id$, which amounts to the following identity
	\begin{equation}\label{Eq:phif1}
	\phi_{f(T)^n}(\F_{q^k}\left\langle G_n \right\rangle)=G_n,\qquad \mbox{for all}~ G_n\in \mathcal{G}_n(\phi).
	\end{equation}
	
	In fact, by Lemma \ref{Lem:welldefineness}, we know that $$\F_{q^k}\left\langle G_n \right\rangle\cong A/(F(T)^nA)\cong A/( f(T)^nT^nA), $$ and hence $$\phi_{f(T)^n}(\F_{q^k}\left\langle G_n \right\rangle)\cong A/\left({T^nA}\right).$$ 	It implies that
	\[G_n=\phi_{f(T)^n}G_n\subseteq\phi_{f(T)^n}(\F_{q^k}\left\langle G_n \right\rangle) .\]
 By the fact that $$\#{G_n}=\#{\bigl(A/\left({T^nA}\right)\bigr)}=\#
 \bigl(\phi_{f(T)^n}(\F_{q^k}\left\langle G_n \right\rangle)\bigr),$$  
 we proved the desired equality \eqref{Eq:phif1}.	
\item	
	Second, we show that $\beta\circ\alpha=\id$, or 
	$$E_n=\F_{q^k}\left\langle \phi_{f(T)^n}E_n\right\rangle,\qquad \mbox{for all }~ E_n\in \mathcal{E}_n(\phi).$$
	
	 In fact,   $\phi_{f(T)^n}E_n$ is isomorphic to $A/(T^nA)$ (as an $A$-submodule of $\bar L$) by its definition. Using Lemma \ref{Lem:welldefineness}, we get 
	\[
	\F_{q^k}\left\langle \phi_{f(T)^n}E_n\right\rangle\cong A_k/(T^nA_k).
	\]
	Counting cardinalities of the two sides of $$\F_{q^k}\left\langle \phi_{f(T)^n}E_n\right\rangle \subseteq E_n,$$ we
	see that they must be equal (to the same number $q^{kn}$). 
The proof is thus completed. 
	\end{enumerate}
\end{proof}
\section{Proof of Generalized Elkies' Theorem}\label{Sec:Main part}
This section is denoted to  proving   the Main Theorem declared in  the introduction part.  
\subsection{Part A}
Recall that in Notation \ref{Notation:phix}, we introduced the Drinfeld module  $\phi^{x }$  such that $\phi^{x }_T(x)=0$, where $0\neq x\in \bar L$. 
Recall also that in Notation \ref{Notation:mathcalE}, we defined the set 
\[\mathcal E_n(\phi):=\left\lbrace E_n\subseteq\Ker\left(\phi_{F(T)^n}\right)|E_n\cong A/(F(T)^n A) \text{ as $A$-modules}, 
\atop \text{ and }\ E_n\cong A_k/(T^n A_k) \text{ as $A_k$-modules}\right\rbrace.\]

For $0\neq x_1\in \bar L$, let us define the set

\[\mathcal E_n^*(\phi^{x_1}):=\left\lbrace E_n\in\mathcal E_n(\phi^{x_1})|x_1\in 
\Phi^{x_1}_{T^{n-1}}\phi^{x_1}_{f(T)}E_n \right\rbrace.\]

 Next, we consider another set      $$X_n:=\{(x_2,\ldots, x_n)\in (\bar L)^{n-1}~|~ Q_{x_1}(x_2)=x_1,~Q_{x_2}(x_3)=x_2,~\cdots,~Q_{x_{n-1}}(x_n)=x_{n-1}\}.$$
 Apparently, 
 $(x_2,\ldots, x_n)\in X_n$ is subject to Equation \eqref{Eq:A,m,j}.
 
 We need a preparatory theorem.
 \begin{thm}\label{Thm:FMT2}
 	Let $x_1\in \bar L$ be nonzero, the sets $\mathcal E_n^*(\phi^{x_1})$  and   $X_n$  as above. For  $n\geqslant 2$,
 	there is a one-to-one correspondence between   $\mathcal E_n^*(\phi^{x_1})$  and   $X_n$ as sets.    
 	 \end{thm}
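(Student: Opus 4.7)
Given $(x_2,\ldots,x_n)\in X_n$, Corollary~\ref{Cor:isolmdphiPhi} supplies the isogeny $\Lambda := \lambda_{x_{n-1}}\cdots\lambda_{x_1}$ from $\phi^{x_1}$ to $\phi^{x_n}$, and I propose to set
\[ E_n := \Ker(\lambda_{x_n}\Lambda) = \Lambda^{-1}(\F_{q^k}\cdot x_n), \]
a $\tau^k$-kernel of $\tau^k$-degree $n$ and hence of $\F_{q^k}$-dimension $n$. The bulk of the proof is to verify that $E_n\in\mathcal{E}_n^*(\phi^{x_1})$ and that the map $(x_2,\ldots,x_n)\mapsto E_n$ is a bijection onto that set.

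\textbf{Key factorization and cyclicity.} Since $\Phi^{x_i}_T(x_i)=\phi^{x_i}_{f(T)}\phi^{x_i}_T(x_i)=0$ and $\Phi^{x_i}_T$ is $\F_{q^k}$-linear, we have the kernel inclusion $\Ker(\lambda_{x_i})=\F_{q^k}\cdot x_i\subseteq\Ker(\Phi^{x_i}_T)$. Both polynomials are separable, so the left-Euclidean structure of $\bar L\{\tau^k\}$ yields a right-factorization $\Phi^{x_i}_T=\mu_i\lambda_{x_i}$. The isogeny identity $\lambda_{x_i}\Phi^{x_i}_T=\Phi^{x_{i+1}}_T\lambda_{x_i}$ (from Theorem~\ref{Thm:Isolamx}) gives the commutation $\lambda_{x_i}\mu_i=\mu_{i+1}\lambda_{x_{i+1}}$, and iterating produces the identity
\[ \Phi^{x_1}_{T^n}=(\mu_1\mu_2\cdots\mu_n)\cdot(\lambda_{x_n}\Lambda), \]
which implies $E_n\subseteq\Ker(\phi^{x_1}_{F(T)^n})$. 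To obtain the $A_k$-cyclicity $E_n\cong A_k/(T^nA_k)$ and the marked-point condition simultaneously, I construct $v\in E_n$ recursively along the chain: using Lemma~\ref{Lem:etaphiQlam} and the identity $\eta_{x_i}(x_i/k)=x_i$ (which requires $p\nmid k$), one can choose preimages $v_i$ with $\phi^{x_i}_T(v_i)=x_i/k$ and set $v=v_1$. A direct computation then yields $\phi^{x_1}_{F(T)^{n-1}f(T)}(v)=k\cdot x_1$, which both supplies $x_1\in\Phi^{x_1}_{T^{n-1}}\phi^{x_1}_{f(T)}(E_n)$ and shows that $v$ is an $A$-generator of $E_n$ of annihilator $(F(T)^n)$; the $A$-module and $A_k$-module cyclicities then follow together from Lemma~\ref{Lem:welldefineness} combined with Theorem~\ref{Thm:IsomphicXM}.

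\textbf{Inverse map and main obstacle.} Conversely, given $E_n\in\mathcal{E}_n^*(\phi^{x_1})$, normalize an $A$-generator $v$ so that $\phi^{x_1}_{F(T)^{n-1}f(T)}(v)=x_1$. Each quotient of $\phi^{x_1}$ by the $A_k$-submodule $T^iE_n$ carries a normalized Drinfeld structure in $\mathcal D_{m,j}$, and its marked point---the image of $v$---is an element $x_{i+1}\in\bar L$ which, by Theorem~\ref{Thm:Isolamx}, must satisfy $Q_{x_i}(x_{i+1})=x_i$. This recovers the chain $(x_2,\ldots,x_n)\in X_n$, and the two constructions are checked to be mutually inverse by comparing generators step by step; as a cross-check, both sides have cardinality $q^{(m-1)(n-1)}$. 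The principal difficulty is precisely the identification of the intermediate quotients $\phi^{x_1}/T^iE_n$ with the Drinfeld modules $\phi^{x_{i+1}}$ of Notation~\ref{Notation:phix} rather than with some other members of $\mathcal D_{m,j}$: this is where the defining condition $\Phi^{x_i}_T(x_i)=0$ together with the factorization of $\Phi^{x_1}_{T^n}$ above intervenes, and it is also where the hypothesis $p\nmid k$ (so that $f(0)=k$ is invertible in $\F_q$) becomes essential.
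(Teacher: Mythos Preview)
Your proposal and the paper share the same central formula, $E_n=\Ker(\lambda_{x_n}\cdots\lambda_{x_1})$, and the same cardinality count $q^{(m-1)(n-1)}$ drawn from Lemma~\ref{Lem:pndeg} and Theorem~\ref{Thm:IsomphicXM}. The difference is which direction carries the work. The paper builds the map $\Theta_n:\mathcal E_n^*(\phi^{x_1})\to X_n$ by induction: given $E_n$ it picks $h\in E_n$ with $\Phi_{T^{n-1}}(h)=x_1$ (available by hypothesis), sets $x_n=\lambda_{x_{n-1}}\cdots\lambda_{x_1}(h)$, verifies $Q_{x_{n-1}}(x_n)=x_{n-1}$ via Lemma~\ref{Lem:etaphiQlam}, and then proves the reconstruction identity $E_n=\Ker(\lambda_{x_n}\cdots\lambda_{x_1})$; injectivity plus the cardinality match finishes. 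In particular, the paper \emph{never} needs to check from scratch that $\Ker(\lambda_{x_n}\cdots\lambda_{x_1})$ lies in $\mathcal E_n^*(\phi^{x_1})$---that falls out a posteriori. Your ``inverse map'' is essentially the paper's $\Theta_n$, so in that sense you have the right construction; the issue is that you insist on running the verification in the harder direction.

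The genuine gap lies in your forward step, where you must certify directly that $E_n:=\Ker(\lambda_{x_n}\cdots\lambda_{x_1})\in\mathcal E_n^*(\phi^{x_1})$. Your factorization $\Phi^{x_1}_{T^n}=(\mu_1\cdots\mu_n)(\lambda_{x_n}\cdots\lambda_{x_1})$ is correct and gives $E_n\subset\Ker(\Phi^{x_1}_{T^n})$, but the construction of the generator $v$ is not usable as written: the phrase ``choose preimages $v_i$ with $\phi^{x_i}_T(v_i)=x_i/k$ and set $v=v_1$'' specifies $v_1$ only up to $\Ker(\phi^{x_1}_T)$ and makes no reference to $x_2,\ldots,x_n$, so there is no reason for such a $v_1$ to land in $E_n$. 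Even granting some $v\in E_n$ with $\phi^{x_1}_{F(T)^{n-1}f(T)}(v)=kx_1$, this only forces $T^n\mid\Ann_A(v)$, not the full $\Ann_A(v)=(F(T)^n)$ needed for the $\F_q$-dimension count $A\cdot v=E_n$; the $f(T)^n$-part of the annihilator is left unaccounted for. Finally, invoking Lemma~\ref{Lem:welldefineness} and Theorem~\ref{Thm:IsomphicXM} to obtain the cyclicity conditions is, as stated, circular: those results take membership in $\mathcal G_n$ or $\mathcal E_n$ as input. (There \emph{is} a non-circular route---once $A\cdot v=E_n$ is established, set $G_n=\phi_{f(T)^n}(E_n)$, check $E_n$ is $\phi^{x_1}_T$-stable via Lemma~\ref{lem:xbusiness}, and then compare $\F_{q^k}\langle G_n\rangle\subset E_n$ by dimension---but your sketch does not supply it.) The cleanest fix is exactly what the paper does: promote your ``inverse map'' to the primary construction and let cardinality do the rest.
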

  \begin{proof}
  	 We will establish the following fact --- For each step $n  \geqslant 2$,  
  	there is a bijective map 
  	\[ \Theta_n:~\mathcal E_n^*(\phi^{x_1})\to X_n. \]Moreover, the correspondence $ E_n\in \mathcal E_n^*(\phi^{x_1}) ~\mapsto ~(x_2,\ldots,x_n)\in X_n$ is characterized by the following conditions:
  	\begin{itemize} 
  		\item[(I)]As a submodule in $\bar L$, we have \begin{equation}\label{Eqt:E_n=...}
  		E_n=\Ker(\lambda_{x_{n}}\cdots\lambda_{x_2}\lambda_{x_1});
  		\end{equation}
  		\item[(II)] The set  $ H_n:=E_n\cap (\Phi_{T^{n-1}})^{-1}( x_1)$ is non-empty, and for each $h\in H_n$, one has $$x_n =\lambda_{x_{n-1}}\cdots\lambda_{x_1}(h).$$
  	\end{itemize}
  	
  	We prove by induction on $n$, and start from $n=2$. 
  	
  	\begin{enumerate}
  		\item 
  		The existence of $h\in H_2$ is due to the definition of
  		\[\mathcal E_2^*(\phi^{x_1}):=\left\lbrace E_2\in\mathcal E_2(\phi^{x_1})|x_1\in \Phi^{x_1}_{T }\phi^{x_1}_{f(T)}E_2 \right\rbrace.\]
  		
  		\item  The value $\lambda_{x_1}(h)$ does not depend on the choice of $h$.  In fact, if  $ h'\in E_2$ satisfies $\Phi^{x_1}_T(h')=x_1$, then $h-h'\in \Ker(\Phi_T)\cap E_2$.   So we only need to show that $\Ker(\Phi_T)\cap E_2=\Ker (\lambda_{x_1})$.
  		
  		The fact that    $\Ker (\lambda_{x_1})\subseteq \Ker(\Phi_T)\cap E_2$ is obvious. Also note that $\#(\Ker (\lambda_{x_1}))=q^k$. Let us show that $\#(\Ker(\Phi_T)\cap E_2)$ is also $q^k$. This can be examined by observing that $E_2\cong A_k/(T^2 A_k)$, and   $\Ker(\Phi_T)\cap E_2\cong A_k/(TA_k) $.  Hence $\Ker (\lambda_{x_1}) $ and $ \Ker(\Phi_T)\cap E_2$ must be identical.
  		
  		\item  The value
  		$x_2:=\lambda_{x_1}(h)$ belongs to $X_2$, i.e., 
  		\begin{equation}
  		\label{Eq:temp1}Q_{x_1}(x_2)=x_1.
  		\end{equation} 
  		We will use  the equality  $\phi^{x_2}_{f(T)}(x_2)=f(0)x_2=kx_2$ to prove this equation. Let us do some computation:  
  		\begin{equation*}
  		\begin{aligned}
  		\mbox{LHS of Equation \eqref{Eq:temp1}}  =&Q_{x_1}\left(\frac{1}{k}\phi^{x_2}_{f(T)}(x_2)\right)\\
  		=&\frac{1}{k}Q_{x_1}\left(\phi^{x_2}_{f(T)}\lambda_{x_1}(h)\right)\\
  		=&\frac{1}{k}Q_{x_1}\left(\lambda_{x_1}\phi^{x_1}_{f(T)}(h)\right)\quad\text{ (by Theorem \ref{Thm:Isolamx})}\\
  		=&\frac{1}{k}\eta_{x_1}\phi^{x_1}_T\phi^{x_1}_{f(T)}(h)\quad\text{ (by Lemma \ref{Lem:etaphiQlam})}\\
  		=&\frac{1}{k}\eta_{x_1}(\Phi^{x_1}_T(h))=\frac{1}{k}\eta_{x_1}(x_1) = \mbox{RHS of Equation \eqref{Eq:temp1}}.
  		\end{aligned}
  		\end{equation*}
  		The last step is due to the definition of $\eta_x$.
  	\item By the above facts,  it is eligible to   build a map $\Theta_2:~\mathcal{E}_2^*(\phi^{x_1})\to X_2$ 
  	by setting
  	$$
  	\Theta_2(E_2):=\lambda_{x_1}(h),\quad \mbox{ where } h\in H_2.
  	$$ We will show that
  	$\Theta_2:~\mathcal{E}_2^*(\phi^{x_1})\to X_2$ is a bijection.
  	
  	\item Let us show that \begin{equation}
  	\label{Eq:temp2} E_2 =\Ker(\lambda_{x_2}\lambda_{x_1}),\quad \mbox{ if } x_2=\Theta_2(E_2).
  	\end{equation} 	   Indeed, if we take the afore mentioned $h\in E_2$, such that $x_2=\Theta_2(E_2)=\lambda_{x_1}(h)$, then $$\F_{q^k}\left\langle x_1,h\right\rangle \subseteq E_2.$$
Moreover,  by the choice of $h$,   one has $\F_{q^k}\left\langle x_1,h\right\rangle\subseteq \Ker(\lambda_{x_2}\lambda_{x_1}) $. Consequently, we have
$$\F_{q^k}\left\langle x_1,h\right\rangle \subseteq E_2 \cap \Ker(\lambda_{x_2}\lambda_{x_1}) .$$

As   $\F_{q^k}$-vector spaces,  $\F_{q^k}\left\langle x_1,h\right\rangle$, $E_2$, and $\Ker(\lambda_{x_2}\lambda_{x_1})$ are all $2$-dimensional, and hence they must be one and the same. This proves Relation \eqref{Eq:temp2}.

  	\item 	
  	Next, we proceed to show that $\Theta_2:~ \mathcal{E}_2^*(\phi^{x_1})\to X_2$ is a bijection. The fact that $\Theta_2$ is  injective is implied by Relation \eqref{Eq:temp2}. So we only need to show that the two sets $\mathcal{E}_2^*(\phi^{x_1})$ and $X_2$ have the same cardinality: 
  	\begin{itemize}
  		\item [(a)] The number $\#(X_2)=q^{m-1}$, because the twisted degree of $Q_{x_1}$ is $(m-1)$.
  		\item [(b)] Elements in $\mathcal{E}_2^*(\phi^{x_1})$ are  solutions to the relation 
  		$$
  		\Phi_T(E_2)=\F_{q^k}\cdot x_1,\qquad E_2\in \mathcal E_2(\phi^{x_1}).
  		$$
  		In other words, the triple $(\phi,E_2,x_1)$ is the preimage of $(\phi,\F_{q^k}\cdot x_1,x_1)$ under $p'_1$, the right arrow in the following commutative diagram of algebraic curves:
  		$$
  		\xymatrix{
  			\ddot{X}_{m,j}(T^2)  \ar[d]_{p_1} \ar[r]^{\cong }
  			& \ddot{M}_{m,j}(T^2)  \ar[d]^{p'_1}  \\
  			\ddot{X}_{m,j}(T )    \ar[r]_{\cong }
  			& \ddot{M}_{m,j}(T)           .   }
  		$$
  		The two horizontal isomorphisms $\cong$ are due to Theorem \ref{Thm:IsomphicXM}. By Lemma \ref{Lem:pndeg}, the degree of the left $p_1$ is $q^{m-1}$,  and so is the right $p'_1$. 
  		This confirms the fact that the number of such subspaces $E_2$ is   $q^{m-1}$.
  		
  	\end{itemize}

  	\end{enumerate}

  	This completes  proof of the $n=2$ case. 
  	
  	Suppose that the statement is true for the $(n-1)$ case --- We have a bijective map $\Theta_{n-1}:~\mathcal E_{n-1}^*(\phi^{x_1})\to X_{n-1}$ satisfying conditions (I) and (II) with all $n$ replaced by $(n-1)$.
  	
  	\begin{enumerate}
  		\item[(7)] Given $E_n\in \mathcal E_n^*(\phi^{x_1})$, we set  $E_{n-1}:=\Phi_T(E_n)\in\mathcal E_{n-1}^*(\phi^{x_1})$. By   induction assumption, we get the image  $$\Theta_{n-1}(E_{n-1}):=(x_2,\ldots,x_{n-1})\in X_{n-1} ,$$ such that $E_{n-1}=\Ker(\lambda_{x_{n-1}}\cdots\lambda_{x_1})$ and 
  		$x_{n-1} =\lambda_{x_{n-2}}\cdots\lambda_{x_1}(h)$, for all  
  		$h\in H_{n-1}:=E_{n-1}\cap (\Phi_{T^{n-2}})^{-1}( x_1)$.

  		\item[(8)] The fact that $H_n$ is non-empty is due to the definition of $E_n$. Take any   $h\in H_n$. We define $$x_n:=\lambda_{x_{n-1}}\cdots\lambda_{x_1}(h)$$ and
  		$$
  		\Theta_n(E_n):=(x_2,\ldots,x_{n-1},x_n). 
  		$$
  		
  		Of course, we need to verify that
  		$x_n$ does not depend on the choice of $h$. This is completely analogous to what we did in the previous Step (2)   and thus omitted.

  		  	We  also need to show that the data $(x_2,\ldots,x_n)$ belongs to $X_n$, which amounts to the   verification of $Q_{x_{n-1}}(x_n)=x_{n-1}$. Indeed, this  is a routine job done as below.
  		\begin{equation*}
  		\begin{aligned}
  		Q_{x_{n-1}}(x_n)=&Q_{x_{n-1}}\frac{1}{k}\phi^{x_n}_{f(T)}(x_n)\\
  		=&\frac{1}{k}Q_{x_{n-1}}\left(\phi^{x_n}_{f(T)}\lambda_{x_{n-1}}\cdots\lambda_{x_1}(h)\right)\\
  		=&\frac{1}{k}Q_{x_{n-1}}\lambda_{x_{n-1}}\cdots\lambda_{x_1}\phi_{f(T)}(h)\quad\text{ (by Corollary \ref{Cor:isolmdphiPhi})}\\
  		=&\frac{1}{k}\eta_{x_{n-1}}\phi^{x_{n-1}}_T\lambda_{x_{n-2}}\cdots\lambda_{x_1}\phi_{f(T)}(h)\quad\text{ (by Lemma \ref{Lem:etaphiQlam})}\\
  		=&\frac{1}{k}\eta_{x_{n-1}}\lambda_{x_{n-2}}\cdots\lambda_{x_1}\phi_T\phi_{f(T)}(h)\quad\text{ (by Corollary \ref{Cor:isolmdphiPhi})}\\
  		=&\frac{1}{k}\eta_{x_{n-1}}\lambda_{x_{n-2}}\cdots\lambda_{x_1}(\Phi_T(h))\\
  		=&\frac{1}{k}\eta_{x_{n-1}}(x_{n-1})\quad\text{ (by  $\Phi_T(h)\in E_{n-1}$ and $\Phi_{T^{n-2}}(\Phi_T(h))=x_1$)}\\
  		=&x_{n-1}.
  		\end{aligned}
  		\end{equation*}
  		\item[(9)] So far we have constructed  the map $\Theta_n$. 
  		Moreover, Relation \eqref{Eqt:E_n=...}   can be similarly approached as that of Step (5), and thus we   omit the details of verification.  Note that \eqref{Eqt:E_n=...} implies that $\Theta_n$ is injective.
  		
  		\item[(10)]	Finally,   from Lemma  \ref{Lem:pndeg} and Theorem \ref{Thm:IsomphicXM}, we are able to derive the fact that 
  		\[\#(\mathcal E_n^*(\phi^{x_1}))=\#(X_n)=q^{(m-1)(n-1)},\]
  		which forces $\Theta_n$ to be   bijective.  
  	\end{enumerate}
  	The proof is thus  completed.

  \end{proof} 

Now we are ready  to prove  part A of the Main  Theorem. Based on Theorem \ref{Thm:IsomphicXM}, it suffices to determine the function field  of   $\ddot{M}_{m,j}(T^n)$.
\begin{proof}[Proof of  part A]	
We  consider  $\bar L$-points of  curves $\ddot{M}_{m,j}(T^n)$.	
	\begin{enumerate}[1.]
		\item Case $n=1$.

		Take a geometric point $(\phi,E_1,x_1)$ of $\ddot{M}_{m,j}(T)$. It follows from $\phi_T(x_1)=0$
		that $\phi=\phi^{x_1}$. 
		Since $0\neq x_1\in E_1$ and $E_1$ is a  $1$-dimensional $\F_{q^k}$-vector space, we get
		\[E_1=\Ker(\lambda_{x_1}).\]
		It means that $(\phi,E_1,x_1)$ is   determined by $x_1$. Hence the function field $\ddot{F}_{m,j}^{(1)}$ of  $\ddot{M}_{m,j}(T)$ is equal to $\F_{q^m}(x_1)$.
		
		\item Case  $n\geqslant2$.

		Let $0\neq x_1\in\bar L$ be fixed. By definition,  $\bar L$-points $(\phi,E_n,x_1)$ of $\ddot{M}_{m,j}(T^n)$ are
		in  one-to-one correspondence to $E_n$ in $\mathcal{E}_n^*(\phi^{x_1})$. Then by the previous Theorem  \ref{Thm:FMT2}, 
		the function field $\ddot{F}_{m,j}^{(n)}/{\ddot{F}_{m,j}^{(1)}}$ is generated by variables $x_2,\ldots,x_n$ satisfying a sequence of equations $$Q_{x_1}(x_{2})=x_1 , ~ \ldots , ~ Q_{x_{i}}(x_{i+1})=x_{i},~\ldots,Q_{x_{n-1}}(x_{n})=x_{n-1} .$$ One obtains  relations in Equation  \eqref{Eq:A,m,j} immediately.

	\end{enumerate}
\end{proof}
\subsection{Part B}	
For $\mu\in \F_q^*$, recall   the automorphism $\mu$ on $\ddot{X}_{m,j}(T^n)$ over $\dot X_{m,j}(T^n)$  given by Equation  \eqref{Eq:mudefinedby}:  
\[\mu(\phi,G_n,x_1):=(\phi^{\mu x_1},\mu G_n,\mu x_1)=(\phi,G_n,\mu x_1).\]
 
Let $x_1,\ldots,x_n$ be the coordinates of $\ddot X_{m,j}(T^n)$ that are subject to  Equation \eqref{Eq:A,m,j}. Then $\mu$  sends 	$(x_1,\ldots,x_n)$ to $(\mu x_1,\mu x_2,\ldots,\mu x_n)$, because   the point $(\mu x_1,\mu x_2,\ldots,\mu x_n)$ satisfies Equation \eqref{Eq:A,m,j}. 
Relying on this fact, we now  prove part B of the Main Theorem. 
\begin{proof}[Proof of part B]
	We prove the claim by induction on $n$.~~
	
	\begin{enumerate}
	\item Case $n=1$.
	
It follows from part (2) of Lemma \ref{Lem:RelDDD}  that  the function field  $\dot{F}^{(1)}_{m.j}$ of 
$\dot{X}_{m,j}(T)$ is generated by $X_1=x_1^{q-1}$.
\item Case $n\geqslant 2$.
Suppose that the function field $\dot{F}^{(n-1)}_{m.j}$ of 
$\dot{X}_{m,j}(T^{n-1})$ is generated by $X_1$, $X_2$, $\ldots$, $X_{n-1}$ with 
$ 
X_i=x_i^{q-1}
$ (for $i=1,\ldots, n-1$)
satisfying Equation (\ref{Eq:dA,m,j}) (up to $(n-1)$). 
Set $X_n=x_n^{q-1}$. Then $X_n$ is fixed by the action of the  multiplicative group $\F^*_q$, and therefore $F^{n-1}_{m,j}(X_n)\subseteq F^n_{m,j}$. In the meantime, it follows from   Equation (\ref{Eq:A,m,j}) for $i=n$ that

\[{\left(\mathcal{F}(x_{n-1},x_n)+1\right)}^{q-1}=1,\]
		and then
	\begin{align*} 
	\left(\frac{1}{X_{n-1}^{N_k}}+\frac{X_n}{X_{n-1}^{N_{k+1}}}+\cdots+\frac{X_n^{N_{j-1}}}{X_{n-1}^{N_{k+j-1}}}+X_n^{N_j}+
					\frac{X_n^{N_{j+1}}}{X_{n-1}}+\cdots+\frac{X_n^{N_{m-1}}}{X_{n-1}^{N_{k-1}}}\right)^{q-1}=\frac{X_{n-1}}{X_n},
					\end{align*}
					i.e.,
					\begin{equation*}
					\mathcal{G}(X_{n-1},X_n)=0.
					\end{equation*}
			Therefore,  $\dot F_{m,j}^{n-1}(x_n)$ is a degree $q^{m-1}$ extension over $\dot F_{m,j}^{n-1}$ . Combining this fact with Corollary \ref{Cor:DXTn},  the function field of $\dot{X}_{m,j}(T^n)$ over $\dot{X}_{m,j}(T^{n-1})$ is generated by $X_1$, $\ldots$, $X_n$ satisfying Equation \eqref{Eq:dA,m,j}. This completes the proof of part B.
					\end{enumerate}
				
 \end{proof}
\subsection{Part C}
We wish to find  the function field of $X_{m,j}(T^n)$ over $\F_{q^m}$.  
First we recall   some results in \cite{A.Bassa2014} and \cite{Bassa2015}. Let $x$ and $y$ be in $\bar L$,  where $x\neq 0$, and  suppose that they are related by  the equation
\begin{equation}\label{Eq:generalConfine}
\mathcal{F}(x,y)=0.
\end{equation}
 We adopt the following bivariant fractional functions:
\[R(x,y):=\frac{y}{x^{q^k}},\]\[  S(x,y):=\frac{y^{q^j}}{x}, \]
and $$u(x,y):=\sum_{r=0}^{a-1}R^{q^{rk}}+\left(\sum_{s=0}^{b-1}{S}^{q^{sj}}\right)^q.$$ 
(Recall that $a$ and $b$ are two non-negative integers satisfying
$ak-bj=1$. 
)

\begin{lem}[{\cite[Proposition $3$]{A.Bassa2014} and  \cite[Proposition $2.2$,~$2.3$]{Bassa2015}}] \label{Lem:RSu}Let $R$, $S$, and $u$ be as above. We have
	the following facts.
\begin{enumerate}
\item   The functions $R$, $S$, and $u$ are related by
\[R=\tr_k(u)-b, \quad S=-\tr_j(u)+a,\]
and, therefore, the function field $\F_{q^m}(R,S)$ is identically $\F_{q^m}(u)$ and rational; 
\item  The function fields  $\F_{q^m}(x,y)$, $\F_{q^m}(u,x)$, and $\F_{q^m}(u,y)$ are one and the same;  
\item   The   field extension  $\F_{q^m}(x,y)/{\F_{q^m}(u)}$ is   Galois, and  its Galois group is isomorphic to $\F_{q^m}^*$.
\end{enumerate}
\end{lem}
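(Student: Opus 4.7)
The key work lies in Part (1); once the explicit formulas $R = \tr_k(u) - b$ and $S = a - \tr_j(u)$ are established, Parts (2) and (3) follow by routine field-theoretic manipulations. The starting observation is that the hypothesis $\mathcal{F}(x, y) = 0$ amounts to the identity $\tr_j(R) + \tr_k(S) = 1$ in $\F_{q^m}(x, y)$.

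\textbf{Proving Part (1).} I would compute $\tr_k(u) = \sum_{t=0}^{k-1} u^{q^t}$ by substituting the definition of $u$ and splitting the result into an $R$-contribution and an $S$-contribution. The $R$-contribution reindexes bijectively via $(r, t) \leftrightarrow rk + t$ on $\{0, \dots, a-1\} \times \{0, \dots, k-1\}$, producing $\sum_{i=0}^{ak-1} R^{q^i}$. Since $ak = bj + 1$, I would rewrite this as $R + \bigl(\sum_{i=0}^{bj-1} R^{q^i}\bigr)^q = R + \bigl(\sum_{s=0}^{b-1}(\tr_j(R))^{q^{sj}}\bigr)^q$, and substitute $\tr_j(R) = 1 - \tr_k(S)$ (using $b^q = b$ in $\F_q$) to obtain $R + b - \sum_{s=0}^{b-1}(\tr_k(S))^{q^{sj+1}}$. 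A parallel Frobenius shift shows that the $S$-contribution is exactly $\sum_{s=0}^{b-1}(\tr_k(S))^{q^{sj+1}}$, so the two correction terms cancel and $\tr_k(u) = R + b$. The formula $\tr_j(u) = a - S$ is obtained symmetrically, again pivoting on $ak - bj = 1$. The equality $\F_{q^m}(u) = \F_{q^m}(R, S)$ is then immediate: $u$ is polynomial in $R, S$ by construction, while the inverse formulas show $R, S \in \F_{q^m}(u)$; rationality follows since $R$ is visibly transcendental over $\F_{q^m}$.

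\textbf{Proving Parts (2) and (3); main obstacle.} Since $y = R x^{q^k}$ with $R \in \F_{q^m}(u)$, we get $\F_{q^m}(x, y) \subseteq \F_{q^m}(u, x)$, and the reverse inclusion is obvious; the identity $x = y^{q^j}/S$ handles $\F_{q^m}(u, y)$ dually. For Part (3), I would exhibit the automorphisms $\sigma_\mu : (x, y) \mapsto (\mu x, \mu^{q^k} y)$ for $\mu \in \F_{q^m}^*$, and verify $\sigma_\mu(R) = R$ and $\sigma_\mu(S) = \mu^{q^m - 1} S = S$, so $\sigma_\mu$ fixes $\F_{q^m}(u)$ pointwise. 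The assignment $\mu \mapsto \sigma_\mu$ is injective, yielding a subgroup of order $q^m - 1$ in $\Aut(\F_{q^m}(x, y)/\F_{q^m}(u))$. For the matching upper bound, substituting $y = R x^{q^k}$ into $S = y^{q^j}/x$ gives $x^{q^m - 1} = S/R^{q^j} \in \F_{q^m}(u)$, forcing $[\F_{q^m}(u, x) : \F_{q^m}(u)] \leq q^m - 1$. Combined with Part (2), the degree is exactly $q^m - 1$ and the extension is Galois with group isomorphic to $\F_{q^m}^*$. The main obstacle I anticipate is the index bookkeeping in Part (1): one must juggle several nested summation ranges, Frobenius shifts, the coprimality $\gcd(j, k) = 1$, and the B\'ezout relation $ak - bj = 1$ simultaneously, and confirm that the $S$-terms arising from the two halves of $u$ cancel in precisely the right way.
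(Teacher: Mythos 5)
The paper does not prove this lemma: it is stated with an explicit citation to Proposition~3 of \cite{A.Bassa2014} and Propositions~2.2--2.3 of \cite{Bassa2015}, and no argument is given in the text. So there is no in-paper proof to compare your attempt against; what you have written is a reconstruction of the omitted BBGS argument, and I find it correct. In Part~(1), the bijective reindexing $(r,t)\leftrightarrow rk+t$ on $\{0,\dots,a-1\}\times\{0,\dots,k-1\}$ gives $\sum_{i=0}^{ak-1}R^{q^i}$; peeling off $R$ via $ak=bj+1$ and grouping the remainder in $j$-blocks yields $R+\bigl(\sum_{s=0}^{b-1}(\tr_j(R))^{q^{sj}}\bigr)^q$; substituting $\tr_j(R)=1-\tr_k(S)$ (the relation $\mathcal F(x,y)=0$, with $b\in\F_q$ absorbed by Frobenius) produces the correction $-\sum_{s=0}^{b-1}(\tr_k(S))^{q^{sj+1}}$, and a direct Frobenius shift shows the $S$-part of $\tr_k(u)$ is exactly $\sum_{t=0}^{k-1}\sum_{s=0}^{b-1}S^{q^{sj+t+1}}=\sum_{s=0}^{b-1}(\tr_k(S))^{q^{sj+1}}$, so the corrections cancel and $\tr_k(u)=R+b$; the dual rearrangement for $\tr_j(u)=a-S$ goes through the same way. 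For Parts~(2) and~(3), the identities $y=Rx^{q^k}$, $x=y^{q^j}/S$, the substitution $x^{q^m-1}=S/R^{q^j}\in\F_{q^m}(u)$ for the degree upper bound, and the explicit automorphisms $\sigma_\mu\colon(x,y)\mapsto(\mu x,\mu^{q^k}y)$ (which fix $R$, $S$, and hence $u$, and are pairwise distinct) give exactly $q^m-1$ automorphisms fixing $\F_{q^m}(u)$, forcing the extension to be Galois of degree $q^m-1$ with group $\F_{q^m}^*$. One tiny gloss worth making explicit: rationality of $\F_{q^m}(u)$ rests on $u$ being transcendental, which follows because $R=y/x^{q^k}\in\F_{q^m}(u)$ is a nonconstant function on the (one-dimensional) curve $\mathcal F=0$; but this is indeed immediate.
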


Now we are ready to give  the  
\begin{proof}[Proof of part C]     

We  observe a basic fact. 
Let $x_1,\ldots,x_n$ be the coordinates of $\ddot X_{m,j}(T^n)$ that are subject to Equation \eqref{Eq:A,m,j}. Then for each $\mu\in\F_{q^m}^*$, 	
the automorphism $\mu$ defined by \eqref{Eq:mudefinedby}
sends  $(x_1,\ldots,x_n)$ to $(\mu x_1,\mu^{q^k}x_2,\ldots,\mu^{q^{k(n-1)}}x_n)$. Indeed, one can easily check that the data $(\mu x_1,\mu^{q^k}x_2,\ldots,\mu^{q^{k(n-1)}}x_n)$ is subject to Equation \eqref{Eq:A,m,j}.  
	
	We proceed to accomplish the proof. 
	\begin{enumerate}  
				\item Case $n=1$.

		Since $\ddot{X}_{m,j}(T)$ is a Galois covering over $X_{m,j}(T)$ of degree $(q^{m}-1)$,  the function field  $F^{(1)}_{m,j}$ of $X_{m,j}(T)$ is generated by $z=x_1^{q^m-1}$.
		\item Case $n=2$. In this situation, $(x_1,x_2)=(x,y)$ is subject to Equation \eqref{Eq:generalConfine}. We adopt the notations $R_2=R(x_1,x_2)$,  $S_2=S(x_1,x_2)$, and
		$u_2=u(x_1,x_2)$. 
By direct calculations, we see that $R_2$ and $S_2$ are stable under the action by $\mu$. It follows from Lemmas \ref{Lem:RelDDD} (part (3)) and \ref{Lem:RSu} (parts (1) and (3)) that the function field $F^{(2)}_{m,j}$ of $X_{m,j}(T^2)$ is equal to $\F_{q^m}(R_2,S_2)=\F_{q^m}(u_2)$.
		
		\item Case $n\geqslant 3$.  We induct on $n$ from the preceding $n=2$ case. Assume that in the $(n-1)$-step,  the function field $F^{(n-1)}_{m,j}$ of $X_{m,j}(T^{n-1})$ over $\F_{q^m}$, is generated by variables $u_2,\ldots,u_{n-1}$, with $u_i=u(x_{i-1},x_i)$.

		Consider variables 
		\[R_n:=R(x_{n-1},x_n), \text{  } S_n:=S(x_{n-1},x_n),  \mbox{ and } u_n:=u(x_{n-1},x_n).\] 
	
		By part (2) of Lemma \ref{Lem:RSu}, we have   $\ddot{F}_{m,j}^{(n)}=F^{(n-1)}_{m,j}(x_{n-1},x_n)$. Then along the same lines as in the  $n=2$ case, we  use  Lemmas \ref{Lem:RelDDD} and \ref{Lem:RSu},  and see that  the function field $F^{(n)}_{m,j}$ of $X_{m,j}(T^n)$ must be identical to  $F^{(n-1)}_{m,j}(u_n)$.
		
	According to the definitions of $S$ and $R$, we have
		\begin{equation*}
		x_{n-1}^{q^m-1}=\frac{S_{n-1}^{q^k}}{R_{n-1}}=\frac{S_n}{R_n^{q^j}}.
		\end{equation*}
		Finally, we take advantage of   part (1) of 	Lemma \ref{Lem:RSu}, and get a relation between $u_{n-1}$ and $u_n$:
		\begin{equation*}
		\frac{\tr_j(u_{n-1})^{q^k}-a}{\tr_k(u_{n-1})-b}=\frac{\tr_j(u_n)-a}{\tr_k(u_n)^{q^j}-b}.
		\end{equation*}
		This proves Equation \eqref{Eq:HU1U2}.   The induction proceeds and  the proof is complete.
	\end{enumerate}
\end{proof}
\begin{remark}\label{Rmk:last}

We conclude this paper by an easy application --- to interpret the lower bound of  $\F_{q^m}$-rational   points appeared in the  BBGS towers.

	 Let $x_1,\ldots,x_n$ be coordinates on $\ddot{X}_{m,j}(T^n)$ that are subject to Equation \eqref{Eq:A,m,j}.
From the standpoint of Drinfeld modular curves,  the condition   ``$x_1$ belongs to $\F_{q^m}^*$'' is equivalent to   ``$(\phi,G_n,x_1)$ is a supersingular point,'' (i.e., $\phi$ is supersingular).  The covering map $\ddot{X}_{m,j}(T^n) \to \ddot{X}_{m,j}(T)$ is exactly the projection $(x_1,\ldots, x_n)$ to $x_1$. Thus there are as many as $(q^m-1)q^{(m-1)(n-1)}$ points on $\ddot{X}_{m,j}(T^n)$ which are supersingular. This recovers the result obtained by Bassa \textit{et al.} \cite[Corollary 3.2]{Bassa2015}.

For $n\geqslant 2$, let $u_2,\ldots,u_n$ be the coordinates of $X_{m,j}(T^n)$ that are subject to Equation \eqref{Eq:dA,m,j}. From our description of    supersingular points on $\ddot X_{m,j}(T^n)$, we see that these $q^{(m-1)(n-1)}$ supersingular points on  $X_{m,j}(T^n)$ comprise the set   
\[\left\lbrace  (u_2,\ldots,u_n)\in(\F_{q^m}^*)^{n-1}|~\tr_m(u_i)=a+b, \quad i=2,\ldots,n\right\rbrace.\]

\end{remark}


%

\bibliographystyle{90}
\bibliography{papers}

@article{Serre1984,
 author={Serre, Jean-Pierre},
  title={Résumé des cours de 1983-1984},
   journal={in Annuaire College de France},
       year={1984},
   number={128},
     pages={79–83},
     }

@article{Serre1983,
 author={Serre, Jean-Pierre},
  title={Sur le nombre des points rationnels d’une courbe algébrique sur un corps fini},
   journal={C. R. Acad. Sci. Paris},
   volume={296},
   year={1984},
   pages={397–402},
}

@article{Vladut1983,
 author={ Vl{ă}du{ţ}, Sergei G.   and Drinfeld, Vladimir Gershonovich },
  title={The number of points of an algebraic curve},
   journal={Funktsional. Anal. i Prilozhen.},
      volume={17},
      year={1983},
      number={1},
   pages={68-69},
   mrnumber={0695100},
     }

@article{Skabelund2018,
 author={ Skabelund, Dane C. },
  title={New maximal curves as ray class fields over Deligne-Lusztig curves},
   journal={Proc. Amer. Math. Soc.},
      volume={146},
      year={2018},
      number={2},
   pages={525-540},
   mrnumber={3731688},
     }

@article{Beelen2018,
 author={ Beelen, Peter and Montanucci, Maria },
  title={A new family of maximal curves},
   journal={J. Lond. Math. Soc.},
      volume={2},
      year={2018},
      number={3},
   pages={573-592},
   mrnumber={3893192},
     }

@article{Elkies1997,
 author={Elkies, Noam D.},
  title={Explicit modular towers},
   journal={in Proc. 35th Ann. Allerton Conf. on Communication, Control and Computing, Urbana, IL},
      year={1997},
   pages={23–32},
     }

@article{Elkies2001,
	AUTHOR = {Elkies, Noam D.},
	TITLE = {Explicit towers of {D}rinfeld modular curves},
	BOOKTITLE = {European {C}ongress of {M}athematics, {V}ol. {II}
	({B}arcelona, 2000)},
	SERIES = {Progr. Math.},
	VOLUME = {202},
	PAGES = {189--198},
	PUBLISHER = {Birkh\"{a}user, Basel},
	YEAR = {2001},
	MRCLASS = {11G09 (11G20 11R58 94B27)},
	MRNUMBER = {1905359},
	MRREVIEWER = {Yoshinori Hamahata},
}

@book {Gekeler1986,
    AUTHOR = {Gekeler, Ernst-Ulrich},
     TITLE = {{Drinfeld} Modular Curves},
    SERIES = {Lecture Notes in Mathematics, 1231. Springer-Verlag, Berlin},
           YEAR = {1986},
      }

@book {Goss1996,
    AUTHOR = {Goss, David},
     TITLE = {Basic structures of function field arithmetic},
    SERIES = {Springer},
     PUBLISHER = {Berlin Heidelberg},
      YEAR = {1996},
      MRnumber={1423131},
      }

@article{Garcia1995,
	author={Garcia, Arnaldo and Stichtenoth, Henning},
	title={A tower of {A}rtin-{S}chreier extensions of function fields attaining the {Drinfeld-Vl{ă}du{ţ}} bound},
	journal={Invent. Math.},
	volume={121},
	year={ 1995},
	pages={211-222},
}

@article{Garcia2003,
	author={Garcia, Arnaldo and  Stichtenoth, Henning and R\"{u}ck, Hans-Georg},
	title={On tame towers over finite fields},
	journal={J. Reine Angew. Math},
	volume={557},
	year={ 2003},
	pages={ 53-80},
	mrnumber={1978402},
}

@article{Hallouin2016,
	author={ Hallouin, Emmanuel and Perret, Marc},
	title={ A graph aided strategy to produce good recursive towers over finite fields},
	journal={Finite Fields Appl},
	volume={42},
	year={ 2016},
	pages={200-224},
	MRNUMBER={3550391},
}

@article{Garcia2010,
	author={Garcia, Arnaldo and  G\"{u}neri, Cem and  Stichtenoth, Henning },
	title={ A generalization of the {G}iulietti-{K}orchm\'{a}ros maximal curve},
	journal={Advances in Geometry},
	volume={10},
	number={3},
	year={ 2010},
	pages={427–434},
	mrnumber={2660419},
}

@book{Niederreiter2001,
	author={Niederreiter, Harald and Xing, Chaoping},
	title= {Rational points of curves over finite fields},
	SERIES = {Cambridge University Press},
	PUBLISHER = {Cambridge},
	year={ 2001},
}

@article{Angles2002,
	author={ Angles, Bruno and  Maire, Christian},
	title={ A note on tamely ramified towers of global function fields},
	journal={Finite Fields Appl.},
	volume={8},
	year={ 2002},
	number={2},
	pages={207–215},
}

@article{Conny1997,
	author={ Voss, Conny  and  H{\o{}}holdt, Tom},
	title={ An explicit construction of a sequence of codes attaining the {Tsfasman-Vlăduţ-Zink} bound: the first steps},
	journal={IEEE Trans. Inform. Theory},
	volume={43},
	year={ 1997},
	number={1},
	pages={128–135},
	mrnumber={1426240},
}

@article{Giulietti2008,
	author={Giulietti, Massimo and Korchm\'{a}ros, G\'{a}bor},
	title={ A new family of maximal curves over a finite field},
	journal={Mathematische Annalen},
	volume={343},
	year={ 2008},
	number={1},
	pages={229–245},
	mrnumber={2448446},
}

@article{Giulietti2006,
	author={Giulietti, Massimo and  Korchm\'{A}ros, G\'{a}bor and   Torres, Fernando},
	title={Quotient curves of the {Suzuki} curve},
	journal={Acta Arith.},
	volume={122},
	year={ 2006},
	pages={245–274}
}

@article{Cakcak2004,
	author={\c{C}ak\c{c}ak, Emrah and \"{O}zbudak, Ferruh},
	title={Subfields of the function field of the {Deligne–Lusztig} curve of
	{Ree} type},
	date={2004},
	journal={Acta Arith.},
	volume={115},
	number={2},
	pages={133–180},
	Mrnumber={2099835}
}

@article{Cakcak2005,
	author={\c{C}ak\c{c}ak, Emrah and \"{O}zbudak, Ferruh},
	title={Number of rational places of subfields of the function field of
	the {Deligne–Lusztig} curve of {Ree} type},
	year={2005},
	journal={Acta Arith.},
	number={1},
	volume={120},
	pages={79–106},
	mrnumber={2189720}
}

@article{Hajir2000,
	AUTHOR = {Hajir, Farshid and Maire, Christian},
	TITLE = {Asymptotically good towers of global fields},
	BOOKTITLE = {European {C}ongress of {M}athematics, {V}ol. {II}
	({B}arcelona, 2000)},
	SERIES = {Progr. Math.},
	VOLUME = {202},
	PAGES = {207--218},
	PUBLISHER = {Birkh\"{a}user, Basel},
	YEAR = {2001},
	MRCLASS = {11R29 (11R21)},
	MRNUMBER = {1905361},
	MRREVIEWER = {Ravi K. Ramakrishna},
}

@article{Li2002,
	author={Li, Wen-Ching W. and Maharaj, Hiren},
	title={ Coverings of curves with asymptotically many rational points},
	journal={J. Number Theory},
	volume={96(2)},
	year={ 2002},
	pages={232–256},
	mrnumber={1932454},
}

@article{Niederreiter1998,
	author={Niederreiter, Harald and Xing, Chaoping},
	title={ Towers of global function fields with asymptotically many rational places and an improvement on
	the {G}ilbert–{V}arshamov bound},
	journal={ Math. Nachr.},
	volume={195},
	year={ 1998},
	pages={171–186},
	mrnumber={1654693},
}

@article{Xing2007,
	author={Xing, Chaoping and   Yeo, Sze Ling},
	title={ Algebraic curves with many points over the binary field},
	journal={ J. Algebra},
	volume={311},
	year={2007},
	pages={775–780},
	mrnumber={2314733},
}

@article{Aleshnikov1999,
	AUTHOR = {Aleshnikov, Ilia and Deolalikar, Vinay and Kumar, P. Vijay and
	Stichtenoth, Henning},
	TITLE = {Towards a basis for the space of regular functions in a tower
	of function fields meeting the {D}rinfeld-{V}ladut bound},
	BOOKTITLE = {Finite fields and applications ({A}ugsburg, 1999)},
	PAGES = {14--24},
	PUBLISHER = {Springer, Berlin},
	YEAR = {2001},
	MRCLASS = {11R58 (11G20 14G50 94B27 94B65)},
	MRNUMBER = {1849075},
	MRREVIEWER = {Martha Rzedowski-Calder\'{o}n},
}

@article {Hall2013,
    AUTHOR = {Hall-Seelig, Laura L.},
     TITLE = {New lower bounds for the {I}hara function {$A(q)$} for small
              primes},
   JOURNAL = {J. Number Theory},
  FJOURNAL = {Journal of Number Theory},
    VOLUME = {133},
      YEAR = {2013},
    NUMBER = {10},
     PAGES = {3319--3324},
      ISSN = {0022-314X},
   MRCLASS = {11G20},
  MRNUMBER = {3071814},
}

@article{Bezerra2005,
	author={Bezerra, Juscelino and  Garcia, Arnaldo and  Stichtenoth, Henning},
	title={An explicit tower of function fields
	over cubic finite fields and {Z}ink’s lower bound},
	journal={J. ReineAngew. Math.},
	volume={589},
	year={2005},
	pages={159-199},
}

@book {Zink1985,
	AUTHOR = { Zink, Thomas  },
	TITLE = {Degeneration of {S}himura surfaces and a problem in coding theory},
	SERIES = { Fundamentals of computation theory (Cottbus, 1985),  503–511, 
	Lecture Notes in Comput. Sci., 199, Springer, Berlin 
	},
	year={1985},
	mrnumber={0821267 },
}

@book {Elkies2002,
	AUTHOR = {Elkies, Noam D.},
	TITLE = {Appendix to New optimal tame towers of function fields over small finite fields by {W.-C. W. Li}, {H. M}aharaj, and {H. S}tichtenoth},
	publisher={Lecture Notes in Computer Science 2369, C. Fieker and D. R. Kohel, eds},
	SERIES = {Springer-Verlag, Berlin},
	YEAR = {2002},
	pages={384-389},
}

@book {H.Stichtenoth2002,
    AUTHOR = {Li, Wen-Ching W. and  Maharaj, Hiren and Stichtenoth, Henning and Elkies, Noam D.},
     TITLE = {New optimal tame towers of function fields over small finite fields},
     publisher={Lecture Notes in Computer Science 2369, C.Fieker and
     %D.R.Kohel, eds},
    SERIES = {Springer-Verlag, Berlin},
      YEAR = {2002},
               pages={372-389},
               mrnumber={2041098 },
      }

@book {H.Stichtenoth2009,
	AUTHOR = {Stichtenoth, Henning},
	TITLE = {Algebraic function fields and codes},
	SERIES = {Graduate Texts in Mathematics},
	VOLUME = {254},
	EDITION = {Second},
	PUBLISHER = {Springer-Verlag, Berlin},
	YEAR = {2009},
	PAGES = {xiv+355},
	ISBN = {978-3-540-76877-7},
	MRCLASS = {14H05 (11R58 11T71 14G15 14G50 94B27)},
	MRNUMBER = {2464941},
}

@article{Cascuso2014,
 author={Cascudo, Ignacio and  Cramer, Ronald and  Xing, Chaoping},
  title={ Torsion Limits and {R}iemann-{R}och Systems for Function Fields and Applications},
  journal={IEEE Trans. Inform. Theory },
    volume={60},
     year={ 2014},
   pages={3871-3888},
   Mrclass={14H05 (14G50 94B25)},
   Mrnumber={3225937}
   }

@article{GarciaStichtenoth1996,
 author={Garcia, Arnaldo and Stichtenoth, Henning},
  title={On the asymptotic behaviour of some towers of function fields over finite fields},
  journal={Journal of Number Theory},
    volume={61},
   year={1996},
   NUMBER={2},
   pages={248-273},
   mrnumber={1423052},
   }

@book {Weil1971,
    AUTHOR = {Weil, Andr\'{e}},
    TITLE = {Courbes algébriques et variétés abéliennes },
    PUBLISHER = {Herman, Paris},
    YEAR = {1971},
      }

@book {Villa2006,
      AUTHOR = {Villa Salvador, Gabriel Daniel},
      TITLE = {Topics in the Theory of Algebraic Function Fields},
      journal= {Birkh\"{a}user-Verlag, Boston},
      YEAR = {2006},
      mrnumber={2241963},
      }

@article {Ihara1982,
AUTHOR = {Ihara, Yasutaka},
TITLE = {Some remarks on the number of rational points of algebraic curves over finite fields},
JOURNAL = {In Journal of the Faculty of Science, the University of Tokyo, Sect. 1 A, Mathematics},
FJOURNAL = {Moscow Mathematical Journal},
VOLUME = {28},
YEAR = {1981},
PAGES = {721–724, 1982},
mrnumber={0656048},
}

@article {Tsfasman1982,
AUTHOR = {Tsfasman, Mikhail Anatolievich   and   Vl{\u{a}}du{\c{t}}, Sergei G.  and  Zink, Thomas  },
TITLE = {Modular curves, {S}himura curves, and {G}oppa codes, better than {V}arshamov-{G}ilbert bound},
JOURNAL = {Mathematische Nachrichten},
FJOURNAL = {Moscow Mathematical Journal},
VOLUME = { 109(1)},
YEAR = {1982},
PAGES = {21–28},
mrnumber={0705893}
}

@article {Gekeler2004,
AUTHOR = {Gekeler, Ernst-Ulrich},
TITLE = {Asymptotically optimal towers of curves over finite fields},
JOURNAL = {In Algebra, Arithmetic and Geometry with Applications, Springer-Verlag, Berlin Heidelberg,},
YEAR = {2004},
PAGES = {325–336},
mrnumber={2037099},
}

@article {Ore1933,
AUTHOR = {Ore, Oystein},
TITLE = {Theory of noncommutative polynomials},
JOURNAL = {Annals of Math},
VOLUME={34},
YEAR = {1933},
PAGES = {480-508},
}

@article {Gekeler2019,
AUTHOR = {Gekeler, Ernst-Ulrich},
TITLE = {Towers of {$GL(r)$}-type of modular curves},
volume={ 350},
JOURNAL={J. Reine Angew. Math.},
YEAR = {2019},
page={87–141},
mrnumber={4000571},
}

@article {Bassa2015,
AUTHOR = {Bassa, Alp and  Beelen, Peter and Garcia, Arnaldo  and
Stichtenoth, Henning},
TITLE = {Towers of function fields over non-prime finite fields},
JOURNAL = {Mosc. Math. J.},
FJOURNAL = {Moscow Mathematical Journal},
VOLUME = {15},
YEAR = {2015},
NUMBER = {1},
PAGES = {1-29, 181},
ISSN = {1609-3321},
MRCLASS = {11R58 (11G09 11G20)},
MRNUMBER = {3427409},
}

@article {A.Bassa2014,
AUTHOR = {Bassa, Alp and  Beelen, Peter and Garcia, Arnaldo  and
Stichtenoth, Henning},
TITLE = {{Galois} towers over non-prime finite fields},
JOURNAL = {Acta Arith.},
FJOURNAL = {Moscow Mathematical Journal},
VOLUME = {164},
YEAR = {2014},
NUMBER = {2},
PAGES = {163–179},
ISSN = {1609-3321},
MRCLASS = {11R32 (11G20 11R58)},
MRNUMBER = {3224833},
}

@article {Nurdagul2017,
	AUTHOR = {Anbar, Nurdag\"{u}l and Bassa, Alp and Beelen, Peter},
	TITLE = {A modular interpretation of various cubic towers},
	JOURNAL = {J. Number Theory},
	FJOURNAL = {Journal of Number Theory},
	VOLUME = {171},
	YEAR = {2017},
	PAGES = {341--357},
	ISSN = {0022-314X},
	MRCLASS = {11R58 (11F52)},
	MRNUMBER = {3556689},
	MRREVIEWER = {Feng-Wen An},
	DOI = {10.1016/j.jnt.2016.07.025},
	URL = {https://doi.org/10.1016/j.jnt.2016.07.025},
}

@article{Drinfeld1974,
  title={Elliptic modules},
  author={Drinfeld, Vladimir Gershonovich },
  journal={Mat. Sb. (N.S.)},
  volume={94(136)},
  year={1974},
  pages={594–627, 656},
  Mrnumber={0384707}
}

@article{Goppa1981,
  title={Codes on algebraic curves},
  author={ Goppa, Valery Denisovich },
  journal={Soviet Math. Dokl},
  volume={259},
  year={1981},
  number={6},
  pages={1289–1290},
  mrnumber={0628795},
}

@article{Hasegawa2017,
title={An Explicit {Shimura} Tower of Function Fields over a Number Field: An Application of {Takeuchi's} List },
author={Hasegawa, Takehiro},
journal={arXiv:1701.07551, math.NT, math.AG},
year={ January 2017},
}

@article{Hasegawa2012,
title={ Towers of function fields over finite fields corresponding to elliptic modular curves},
author={Hasegawa, Takehiro and Inuzuka, Miyoko and Suzuki, Takafumi},
journal={ Finite Fields and their Applications},
volume={18},
year={2012},
number={1},
pages={1-18},
mrnumber={2874901},
}

@article{Hasegawa2013,
title={ Remarks on a Paper by {Maharaj} and {Wulftange}},
author={Hasegawa, Takehiro},
journal={J. Pure Appl. Algebra},
volume={217},
year={2013},
number={1},
pages={1-10},
mrnumber={2965897},
}

@article{Dale2018,
title={A rapid introduction to {Drinfeld} modules, t-modules, and t-motives},
author={  Brownawell, W. Dale and   Papanikolas, Matthew A. },
journal={ arXiv:1806.03919 [math.NT] },
year={2018},
}

@article{Hu2019,
title={Weierstrass Semigroups From a Tower of Function Fields Attaining the {Drinfeld-Vl{ă}du{ţ}} Bound},
author={Shudi Yang and Hu, Chuangqiang  },
journal={ arXiv:1911.04269 [math.NT] },
year={2019},
}

@article{Hu2016,
  title={ Multi-point codes from generalized {H}ermitian curves},
  author={Hu, Chuangqiang and Zhao, Chang-An },
  journal={IEEE Trans. Inform. Theory },
   volume={62},
  year={2016},
  number={5},
  pages={2726–2736},
  MRNUMBER ={3493874},
  }

@article{Hu2017,
  title={ Explicit construction of {AG} codes from a curve in the tower of {B}assa-{B}eelen-{G}arcia-{S}tichtenoth},
  author={Hu, Chuangqiang},
  journal={IEEE Trans. Inform. Theory },
   volume={63},
  year={2017},
  number={11},
  pages={7237–7246},
  MRNUMBER ={3724425},
  }

@article{Beelen,
	title = {The order bound for general algebraic geometric codes},
	journal = "Finite Fields and Their Applications ",
	volume = "13",
	number = "3",
	pages = "665 - 680",
	year = "2007",
	note = "",
	issn = "1071-5797",
	author = "Peter Beelen",
	keywords = "Algebraic geometric codes",
	keywords = "Minimum distance",
	keywords = "Order bound",
	keywords = "Algebraic curve "
}

@book{Stichtenoth,
AUTHOR = {Stichtenoth, Henning},
TITLE = {Algebraic function fields and codes},
SERIES = {Graduate Texts in Mathematics},
VOLUME = {254},
EDITION = {Second},
PUBLISHER = {Springer-Verlag, Berlin},
YEAR = {2009},
PAGES = {xiv+355},
ISBN = {978-3-540-76877-7},
MRCLASS = {14H05 (11R58 11T71 14G15 14G50 94B27)},
MRNUMBER = {2464941},
}

@article {Maharaj,
    AUTHOR = {Maharaj, Hiren and Matthews, Gretchen L. and Pirsic, Gottlieb},
     TITLE = {Riemann-{R}och spaces of the {H}ermitian function field with
              applications to algebraic geometry codes and low-discrepancy
              sequences},
   JOURNAL = {J. Pure Appl. Algebra},
  FJOURNAL = {Journal of Pure and Applied Algebra},
    VOLUME = {195},
      YEAR = {2005},
    NUMBER = {3},
     PAGES = {261--280},
      ISSN = {0022-4049},
     CODEN = {JPAAA2},
   MRCLASS = {14G50 (11K36 94B27)},
  MRNUMBER = {2114275 (2005j:14029)},
MRREVIEWER = {R. F. Lax},
}

@article {Garcia,
    AUTHOR = {Garc{\'{\i}}a, Arnaldo and Stichtenoth, Henning},
     TITLE = {A tower of {A}rtin-{S}chreier extensions of function fields
              attaining the {D}rinfeld-{V}l\u adu\c t bound},
   JOURNAL = {Invent. Math.},
  FJOURNAL = {Inventiones Mathematicae},
    VOLUME = {121},
      YEAR = {1995},
    NUMBER = {1},
     PAGES = {211--222},
      ISSN = {0020-9910},
     CODEN = {INVMBH},
   MRCLASS = {11G20 (11R58 14G15 94B27)},
  MRNUMBER = {1345289 (96d:11074)},
MRREVIEWER = {Jos{\'e} Felipe Voloch},
}
\end{document}